\documentclass[12pt]{article}
\usepackage{amsmath,amsxtra,latexsym,amsthm,amssymb,amscd,amsfonts}
\usepackage[portrait, top=3.5cm, bottom=3cm, left=3.5cm, right=2cm] {geometry}

\theoremstyle{plain}
\setlength{\textwidth}{6.3in} \setlength{\topmargin}{-0.2in}
\setlength{\textheight}{9.0in} \setlength{\oddsidemargin}{0.3in}

\newtheorem{lemma}{\bf Lemma}[section]
\newtheorem{ex}{\bf Example}[section]

\newtheorem{theorem}{\bf Theorem}[section]

\def\tto{\;{\lower 1pt \hbox{$\rightarrow$}}\kern -10pt
\hbox{\raise 2pt \hbox{$\rightarrow$}}\;}

\def\epi{\mbox{\rm epi}\,}

\def\dom{\mbox{\rm dom}\,}
\def\ker{\mbox{\rm ker}\,}

\def \"{a}

\begin{document}
\pagestyle{myheadings}

\newtheorem{Theorem}{Theorem}[section]
\newtheorem{Proposition}[Theorem]{Proposition}
\newtheorem{Remark}[Theorem]{Remark}
\newtheorem{Lemma}[Theorem]{Lemma}
\newtheorem{Corollary}[Theorem]{Corollary}
\newtheorem{Definition}[Theorem]{Definition}
\newtheorem{Example}[Theorem]{Example}
\renewcommand{\theequation}{\thesection.\arabic{equation}}
\normalsize

\setcounter{equation}{0}

\title{{Differential stability of a class of convex optimal control problems}}

\author{D. T. V.~An\footnote{Department of Mathematics and Informatics, College of Sciences,
Thai Nguyen University, Thai Nguyen City, Vietnam; email: duongvietan1989@gmail.com.}\ \, and \ J.-C. Yao\footnote{ Center for General Education, China Medical University, 
			Taichung 40402, Taiwan; email: yaojc@mail.cmu.edu.tw.} \ \, and \ \, N. D. Yen\footnote{Institute
of Mathematics, Vietnam Academy of Science and Technology, 18 Hoang
Quoc Viet, Hanoi 10307, Vietnam; email: ndyen@math.ac.vn.}}

\maketitle
\date{}
\medskip
\begin{quote}
\noindent {\bf Abstract.} A parametric constrained convex optimal control problem, where the initial state is perturbed and the linear state equation contains a noise, is considered in this paper. Formulas for computing the subdifferential and the singular subdifferential of the optimal value function at a given parameter are obtained by means of some recent results on differential stability in mathematical programming. The computation procedures and illustrative examples are presented. 

\noindent {\bf Keywords:}\  Parametric constrained convex optimal control problem, linear ordinary differential equation, Sobolev space, parametric mathematical programming problem, optimal value function, subdifferential, singular subdifferential. 

\medskip

\noindent {\bf AMS Subject Classifications:}\ 49J15, 49J53, 49K40, 90C25, 90C31.

\end{quote}

\section{Introduction}
\markboth{\centerline{\it Introduction}}{\centerline{\it D.T.V.~An, J.-C.~Yao,
and N.D.~Yen}} \setcounter{equation}{0}
According to Bryson \cite[p.~27, p.~32]{Bryson}, optimal control had its origins in the calculus of variations in the 17th century. The calculus of variations was developed further in the 18th by
L.~Euler and J.~L.~Lagrange and in the 19th century by A.~M.~Legendre, C.~G.~J.~Jacobi, W.~R.~Hamilton, and K.~T.~W.~Weierstrass. In 1957, R.~E.~Bellman gave a new view of Hamilton-Jacobi theory which he called \textit{dynamic programming}, essentially a nonlinear feedback control scheme. McShane \cite{McShane} and Pontryagin et al. \cite{Pontryagin_etal.__1962} extended the calculus of variations to handle control variable inequality constraints. \textit{The Maximum Principle} was enunciated by Pontryagin. 

\medskip
As noted by Tu \cite[ p.~110]{Tu}, although much pioneering work had been carried out by other authors, Pontryagin and his associates are the first ones to develop and present the Maximum Principle in unified manner. Their work attracted great attention among mathematicians, engineers, economists, and spurred wide research activities in the area (see \cite[Chapter~6]{Mordukhovich_2006b}, \cite{Tu,Vinter2000}, and the references therein).

\medskip
Differential stability of parametric optimization problems is an important topic in variational analysis and optimization. In 2009, Mordukhovich et al.  \cite{MordukhovichNamYen2009} presented formulas for computing and estimating the Fr\'echet subdifferential, the Mordukhovich subdifferential, and the singular subdifferential of the optimal value function. If the problem in question is convex, An and Yen \cite{AnYen} and then An and Yao \cite{AnYao} gave formulas for computing  subdifferentials of the optimal value function by using two versions of the Moreau-Rockafellar theorem and appropriate regularity conditions.
It is worthy to emphasize that differential properties of optimal value functions in parametric mathematical programming and the study on differential stability of optimal value functions in optimal control have attracted attention of many researchers; see, e.g., \cite{CerneaFrankowska2005,Chieu_Kien_Toan,MoussaouiSeeger1994,TR2004,RockafellarWolenski2000I,RockafellarWolenski2000II,Seeger1996,Toan_2015,Toan_Kien} and the references therein.

\medskip
Very recently, Thuy and Toan \cite{Toan_Thuy} have obtained a formula for computing the subdifferential of the optimal value function to a parametric unconstrained convex optimal control problem with a convex objective function and linear state equations. 

\medskip
The aim of this paper is to develop the approach of \cite{Toan_Thuy} to deal with \textit{constrained control problems}. Namely, based on the paper of An and Toan \cite{AnToan} about differential stability of parametric convex mathematical programming problems, we will get new results on computing the subdifferential and the singular subdifferential of the optimal value function. Among other things, our result on computing the subdifferential extends and improves the main result of \cite{Toan_Thuy}. Moreover, we also describe in details the process of finding vectors belonging to the subdifferential (resp., the singular subdifferential) of the optimal value function. Thus, on one hand, our results have the origin in the study of \cite{Toan_Thuy}. On the other hand, they are the results of deepening that study for the case of constrained control problems. Meaningful examples, which have the origin in \cite[Example~1, p.~23]{Pontryagin_etal.__1962}, are designed to illustrate our results. In fact, these examples constitute an indispensable part of the present paper.

\medskip
Note that differentiability properties of the optimal value function in both unconstrained and constrained control problems have been studied from different point of views. For instance, Rockafellar \cite{TR2004} investigated the optimal value of a parametric optimal control problem with an differential inclusion and a point-wise control constraint as a function of the time horizon and the terminal state. Meanwhile, based on an epsilon-maximum principle of Pontryagin type, Moussaoui and Seeger \cite[Theorem~3.2]{MoussaouiSeeger1994} considered an optimal control problem with linear state equations and gave a formula for the subdifferential of the optimal value function without assuming the existence of optimal solutions to the unperturbed problem.

\medskip
The organization of the paper is as follows. Section 2 formulates the parametric convex optimal control problem which we are interested in.  The same section reviews some of the standard facts on functional analysis \cite{KF70,Rudin_1991}, convex analysis \cite{Ioffe_Tihomirov_1979}, variational analysis \cite{Mordukhovich_2006a}, and presents one theorem from \cite{AnToan} which is important for our investigations. Formulas for the subdifferential and the singular subdifferential of the optimal value function of the convex optimal control problem are established in Section 3. The final section presents a series of three closely-related illustrative examples.

\section{Preliminaries}
\markboth{\centerline{\it Preliminaries}}{\centerline{\it D.T.V.~An, J.-C.~Yao,
and N.D.~Yen}} \setcounter{equation}{0}

Let $W^{1,p} ([0,1],\Bbb{R}^n)$, $1 \le p < \infty$, be the Sobolev space  consisting of absolutely continuous functions $x:[0,1] \rightarrow \Bbb{R}^n$ such that $\dot{x} \in L^p([0,1], \Bbb{R}^n)$. Let there be given
\par - matrix-valued functions $A(t)=(a_{ij}(t))_{n\times n},\, B(t)=(b_{ij}(t))_{n\times m},$ and $ C(t)=(c_{ij}(t))_{n\times k};$
\par - real-valued functions $g: \Bbb{R}^n \to \Bbb{R}$ and $L:[0,1]\times \Bbb{R}^n \times \Bbb{R}^m \times \Bbb{R}^k \to \Bbb{R}$;
\par - a convex set $\mathcal{U} \subset L^p([0,1], \Bbb{R}^m)$;
\par - a pair of parameters $(\alpha, \theta) \in \Bbb{R}^n \times L^p([0,1], \Bbb{R}^k).$
\\Put 
\begin{align*}
& X=W^{1,p} ([0,1],\Bbb{R}^n), \ U=L^p([0,1],\Bbb{R}^m), \ Z= X \times U,\\
&\Theta= L^p([0,1],\Bbb{R}^k), \ W=\Bbb{R}^n \times \Theta.
\end{align*}
Consider the constrained optimal control problem which depends on a parameters pair $(\alpha, \theta)$: \textit{Find a pair $(x,u)$, where $x\in W^{1,p}([0,1],\Bbb{R}^n)$ is a trajectory and $u \in L^p([0,1],\Bbb{R}^m)$ is a control function, which minimizes the objective function
\begin{align}\label{objective_Function_Control}
g(x(1)) + \int_0^1 L(t, x(t), u(t),\theta(t))dt
\end{align}
and satisfies the linear ordinary differential equation
\begin{align}
\label{state_equation}
\dot{x}(t)=A(t)x(t)+B(t)u(t)+C(t)\theta(t) \ \, \mbox{a.e.} \ t \in [0,1],
\end{align}
the initial value
\begin{align}
\label{initial_value}
x(0)=\alpha,
\end{align}
and the control constraint
\begin{align}
\label{control_constraint}
u \in \mathcal{U}.
\end{align}}

It is well-known that $X,\, U,\, Z,$ and $\Theta$ are Banach spaces. For each $w=(\alpha, \theta)\in W$, denote by $V(w)$ and $S(w)$, respectively, the optimal value and the solution set of problem~\eqref{objective_Function_Control}--\eqref{control_constraint}. We call $V: W \to \overline{\Bbb{R}}$, where $\overline{\Bbb{R}}=[-\infty,+\infty]$ denotes the extended real line, the \textit{optimal value function} of the problem in question. If for each $w=(\alpha, \theta)\in W$ we put
\begin{align}
\label{opjective_function_J}
J(x,u,w)=g(x(1))+ \int_0^1 L(t, x(t), u(t),\theta(t))dt,
\end{align}
\begin{align}
\label{map_constraint}
G(w)=\big\{z=(x,u)\in X \times U \,\mid \mbox{\eqref{state_equation} and \eqref{initial_value} are satisfied} \big \},
\end{align}
and
\begin{align}
\label{set_constraint}
K=X \times \mathcal{U},
\end{align}
then \eqref{objective_Function_Control}--\eqref{control_constraint} can be written formally as $\min\{J(z, w)\mid z \in G(w)\cap K\}$, and 
\begin{align}
\label{Re_optimal_value_function}
V(w)=\inf\{J(z, w)\mid z=(x,u)\in G(w)\cap K\}.
\end{align}

It is assumed that $V$ is finite at $\bar w=(\bar\alpha,\bar\theta)\in W$ and $(\bar x, \bar u)$ is a solution of the corresponding problem, that is $(\bar x, \bar u)\in S(\bar w)$. 

\medskip
Similarly as in \cite[p.~310]{KF70}, we say that a vector-valued function $g:[0,1]\to Y$, where $Y$ is a normed space, is essentially bounded if there exists a constant $\gamma >0$ such that the set $T:=\left\{t \in [0,1] \mid ||g(t)|| \le \gamma \right\}$ is of full Lebesgue measure. The latter means $\mu([0,1] \backslash\ T )=0,$ with $\mu$ denoting the Lebesgue measure on $[0,1].$

\medskip
Consider the following assumptions:
\begin{description}
\item[(A1)] The matrix-valued functions $A:[0,1] \to M_{n,n}(\Bbb{R}),$  $B:[0,1] \to M_{n,m}(\Bbb{R}),$ and $C:[0,1] \to M_{n,k}(\Bbb{R}),$ are measurable and essentially bounded.

\item[(A2)] The functions $g: \Bbb{R}^n \to \Bbb{R}$ and $L:[0,1]\times \Bbb{R}^n \times \Bbb{R}^m \times \Bbb{R}^k \to \Bbb{R}$ are such that  $g(\cdot)$ is convex and continuously differentiable on $\Bbb{R}^n$, $L(\cdot,x,u,v)$ is measurable for all $(x,u,v) \in \Bbb{R}^n \times \Bbb{R}^m \times \Bbb{R}^k$, $L(t,\cdot,\cdot,\cdot)$ is convex and continuously differentiable on $\Bbb{R}^n \times \Bbb{R}^m \times \Bbb{R}^k $ for almost every $t \in [0,1]$, and there exist constants $c_1>0,\, c_2>0,\, r \ge 0,\, p \ge p_1 \ge 0,\, p-1 \ge p_2 \ge 0$, and a nonnegative function $w_1\in L^p([0,1], \Bbb{R})$, such that $|L(t,x,u,v)| \le c_1\big (w_1(t) +||x||^{p_1}+ ||u||^{p_1}+||v||^{p_1}\big ),$
$$ \max \big \{ |L_x(t,x,u,v)|,|L_u(t,x,u,v)|,|L_v(t,x,u,v)| \big \} \le c_2\big (||x||^{p_2}+||u||^{p_2}+||v||^{p_2} \big )+r$$
for all $(t,x,u,v) \in [0,1] \times \Bbb{R}^n \times \Bbb{R}^m \times \Bbb{R}^k$.
\end{description}

We now recall some results from functional analysis related to Banach spaces. The results can be found in \cite[pp.~ 20-22]{Ioffe_Tihomirov_1979}.
For every $p\in [1,\infty)$, the symbol $L^p([0,1],\Bbb{R}^n)$ denotes the Banach space of Lebesgue measurable functions $x$ from $[0,1]$ to $\Bbb{R}^n$ for which the integral $\displaystyle\int_0^1 \|x(t)\|^p dt$ is finite. The norm in  $L^p([0,1],\Bbb{R}^n)$ is given by
$$||x||_p 
=\bigg(\int_0^1 \|x(t)\|^p dt \bigg)^{\frac{1}{p}}.$$
The dual space of $L^p([0,1],\Bbb{R}^n)$ is $L^q([0,1],\Bbb{R}^n)$, where $\frac{1}{p}+\frac{1}{q}=1.$ This means that, for every continuous linear functional $\varphi$ on the space $L^p([0,1],\Bbb{R}^n)$, there exists a unique element $x^* \in L^q([0,1],\Bbb{R}^n)$ such that $\varphi(x)=\langle \varphi, x \rangle =\int_0^1 x^*(t)x(t) dt$
for all $x\in L^p([0,1],\Bbb{R}^n)$. One has $||\varphi||=||x^*||_q.$

\medskip
The Sobolev space $W^{1,p} ([0,1],\Bbb{R}^n)$ is equipped with the norm 
$$||x||^1_{1,p}=\|x(0)\|+||\dot x||_p,$$ or the norm $||x||^2_{1,p}=||x||_p+||\dot x||_p.$ The norms $||x||^1_{1,p}$ and $||x||^2_{1,p}$ are equivalent (see, e.g., \cite[p.~21]{Ioffe_Tihomirov_1979}). Every continuous linear functional $\varphi$ on $W^{1,p} ([0,1],\Bbb{R}^n)$ can be represented in the form
$$\langle \varphi, x \rangle = \langle a, x(0)\rangle + \int_0^1 \dot {x}(t)u(t) dt,$$
where the vector $a\in \Bbb{R}^n$ and the function $ u\in L^{q} ([0,1],\Bbb{R}^n)$ are uniquely defined. In other words, $\big(W^{1,p} ([0,1],\Bbb{R}^n) \big)^*= \Bbb{R}^n\times L^{q} ([0,1],\Bbb{R}^n),$ where $\ \frac{1}{p}+\frac{1}{q}=1;$ see, e.g., \cite[p.~21]{Ioffe_Tihomirov_1979}. In the case of $p=2$, $W^{1,2} ([0,1],\Bbb{R}^n)$ is a Hilbert space with the inner product being given by
$$\langle x, y \rangle = \langle x(0), y(0) \rangle + \int_0^1 \dot x (t) \dot y (t)dt,$$
for all $x,y \in W^{1,2} ([0,1],\Bbb{R}^n).$

\medskip
We shall need two dual constructions from convex analysis \cite{Ioffe_Tihomirov_1979}. Let $X$ and $Y$ be Hausdorff locally convex topological vector spaces with the duals denoted, respectively, by $X^*$ and $Y^*$. For a convex set $\Omega\subset X$ and $\bar x\in \Omega$, the set
 \begin{align}\label{normals_convex_analysis} N(\bar x; \Omega)=\{x^*\in X^* \mid \langle x^*, x-\bar x \rangle \leq 0, \ \, \forall x \in \Omega\}\end{align} 
 is called the \textit{normal cone} of $\Omega$ at $\bar {x}.$
 Given a function $f: X\rightarrow \overline{\mathbb{R}}$, one says that $f$ is \textit{proper} if ${\rm{dom}}\, f:=\{ x \in X \mid f(x) < +\infty\} $
   is nonempty, and if $f(x) > - \infty$ for all $x \in X$. If $ {\rm{epi}}\, f:=\{ (x, \alpha) \in X \times \mathbb{R} \mid \alpha \ge f(x)\}$ is convex, then $f$ is said to be a convex function. The {\it subdifferential} of a proper convex function $f: X\rightarrow \overline{\mathbb{R}}$ at a point $\bar x \in {\rm dom}\,f$ is defined by
   \begin{align}\label{subdifferential_convex_analysis}
   \partial f(\bar x)=\{x^* \in X^* \mid \langle x^*, x- \bar x \rangle \le f(x)-f(\bar x), \ \forall x \in X\}.
   \end{align}	
It is clear that
  $\partial f(\bar x)=\{x^* \in X^* \mid (x^*,-1) \in N(( \bar x, f(\bar x)); {\rm{\epi}}\, f ) \}. $
  
  \medskip
  In the spirit of \cite[Definition~1.77]{Mordukhovich_2006a}, we define
  the {\it singular subdifferential} of a convex function $f$ at a point $\bar x \in {\rm dom}\,f$ by
   \begin{align}\label{singular_subdifferential_convex}
   \partial^\infty f(\bar x)=\{x^* \in X^* \mid (x^*,0)\in N ( (\bar x, f(\bar x)); {\rm{epi}}\, f)\}.
   \end{align} 
   For any convex function $f$, one has $\partial^\infty f(x)=N(x;{\rm dom}\,f)$ (see, e.g., \cite{AnYen}). If $\bar x\notin {\rm dom}\,f$, then we put $\partial f(\bar x)=\emptyset$ and $\partial^\infty f(\bar x)=\emptyset$.

\medskip   
 In the remaining part of this section, we present without proofs one theorem from \cite{AnToan} which is crucial for the subsequent proofs, thus making our exposition self-contained and easy for understanding.
 
 \medskip 
 Suppose that $X$, $W$, and $Z$ are Banach spaces with the dual spaces $X^*$, $W^*$, and $Z^*$, respectively. Assume that $M: Z\to X$ and $T: W\to X$
are continuous linear operators. Let $M^*: X^*\to Z^*$ and $T^*: X^*\to W^*$ be the adjoint operators of $M$ and $T$, respectively. Let $f:Z\times W\to\overline{\Bbb{R}}$ be a convex function and $\Omega$ a convex subset of $Z$ with nonempty interior. For each $w\in W$, put
 $ H(w)=\big\{z\in Z\,|\, Mz=Tw\big\}$ and consider the optimization problem 
  	 	\begin{eqnarray} \label{12b}
  	 	\min\{f(z,w) \, \mid \,z\in H(w)\cap \Omega \}.
  	 	\end{eqnarray} 
It is of interest to compute the subdifferential and the singular subdifferential of the optimal value function
  	 	\begin{eqnarray} \label{12}
  	 	h(w):=\inf_{z\in H(w)\cap \Omega} f(z,w)
  	 	\end{eqnarray} of \eqref{12b} where $w$ is subject to change. Denote by $\widehat S(w)$ the solution set of that problem. For our convenience, 
  	 we define the linear operator $\Phi: Z\times W \rightarrow X$ by setting $\Phi(z,w)=Mz-Tw$ for all $(z,w)\in Z \times W$. Note that 
  	the subdifferential $\partial h(w)$ has been computed in \cite{Chieu_Kien_Toan,AnToan}, while the singular subdifferential $\partial^\infty h(w)$ has been evaluated in \cite{AnToan}. 
  	
  	\medskip
  	The following result of \cite{AnToan} will be used intensively in this paper.
 
 	\begin{theorem}{\rm(See \cite[Theorem 2]{AnToan})}\label{asprogramingproblem} 
   	 		Suppose that $\Phi$ has closed range and ${\rm ker}\, T^* \subset {\rm ker}\,M^*$. If the optimal value function $h$ in \eqref{12} is finite at $\bar w \in {\rm dom}\, \widehat{S}$ and $f$ is continuous at $(\bar z,\bar w) \in  (W \times \Omega) \cap {\rm gph}\,H ,$ then
   	 		\begin{eqnarray}\label{BT1}
   	 		\partial h(\bar w)= \bigcup_{(z^*, w^*)\in \partial f(\bar z, \bar w)}\;\bigcup_{v^*\in N(\bar z;
   	 			\Omega)}\big[ w^*+ T^*\big((M^*) ^{-1}(z^* +v^*)\big)\big]
   	 		\end{eqnarray}
   	 		and
   	 		 		\begin{eqnarray}\label{BT1'}
   	 		\partial^\infty h(\bar w)= \bigcup_{(z^*, w^*)\in \partial^\infty f(\bar z, \bar w)}\;\bigcup_{v^*\in N(\bar z;
   	 			\Omega)}\big[ w^*+ T^*\big((M^*) ^{-1}(z^* +v^*)\big)\big],
   	 		\end{eqnarray}
   	 		where $\big(M^*) ^{-1}(z^* +v^*)=\{x^*\in X^*\mid M^*x^*=z^* +v^*\}$. 		
   	 	\end{theorem}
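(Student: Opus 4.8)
The plan is to derive both formulas from the representation of $h$ as the marginal function of a single convex function on $Z\times W$, combined with a Moreau--Rockafellar-type sum rule and Banach's closed-range theorem.

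First I would set $\varphi(z,w):=f(z,w)+\delta_{\Omega\times W}(z,w)+\delta_{\ker\Phi}(z,w)$, where $\delta_S$ is the indicator function of a set $S$ (equal to $0$ on $S$ and to $+\infty$ elsewhere); since $\Phi(z,w)=Mz-Tw$ we have $\ker\Phi=\gph H$, so $\varphi$ is convex and $h(w)=\inf_{z\in Z}\varphi(z,w)$. Because $\bar w\in\dom\widehat{S}$, the infimum defining $h(\bar w)$ is attained at $\bar z$, hence $\varphi(\bar z,\bar w)=h(\bar w)\in\R$. For a convex marginal function this already forces, with no constraint qualification, $\partial h(\bar w)=\{w^*\in W^*\mid(0,w^*)\in\partial\varphi(\bar z,\bar w)\}$; and, using the identity $\partial^\infty\varphi(\bar z,\bar w)=N((\bar z,\bar w);\dom\varphi)$ recalled above together with $\dom h=\proj_W(\dom\varphi)$ and $N(\bar w;\proj_W C)=\{w^*\mid(0,w^*)\in N((\bar z,\bar w);C)\}$ for convex $C\ni(\bar z,\bar w)$, one likewise gets $\partial^\infty h(\bar w)=\{w^*\in W^*\mid(0,w^*)\in\partial^\infty\varphi(\bar z,\bar w)\}$. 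Thus everything reduces to computing $\partial\varphi(\bar z,\bar w)$ and $\partial^\infty\varphi(\bar z,\bar w)$.

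Next I would apply the sum rule. Since $f$ is continuous at $(\bar z,\bar w)$ and $(\bar z,\bar w)$ lies both in $\Omega\times W$ and in $\ker\Phi$, the Moreau--Rockafellar theorem, whose constraint qualification is guaranteed in this Banach setting precisely by the assumptions that $\Phi$ has closed range and $\ker T^*\subset\ker M^*$, yields $\partial\varphi(\bar z,\bar w)=\partial f(\bar z,\bar w)+N((\bar z,\bar w);\Omega\times W)+N((\bar z,\bar w);\ker\Phi)$, as well as the same identity with every $\partial$ replaced by $\partial^\infty$ (continuity of $f$ at $(\bar z,\bar w)$ also gives $\partial^\infty f(\bar z,\bar w)=\{0\}$, so the union over $\partial^\infty f(\bar z,\bar w)$ in \eqref{BT1'} collapses to a single term). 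It then remains to identify the two normal cones: $N((\bar z,\bar w);\Omega\times W)=N(\bar z;\Omega)\times\{0\}$ because $W$ is a linear space, and, $\ker\Phi$ being a closed subspace, $N((\bar z,\bar w);\ker\Phi)=(\ker\Phi)^\perp$, which by Banach's closed-range theorem coincides with the range of $\Phi^*$; from $\langle\Phi^*x^*,(z,w)\rangle=\langle x^*,Mz-Tw\rangle$ one reads off $\Phi^*x^*=(M^*x^*,-T^*x^*)$, so that $N((\bar z,\bar w);\ker\Phi)=\{(M^*x^*,-T^*x^*)\mid x^*\in X^*\}$.

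Assembling these pieces, $w^*\in\partial h(\bar w)$ if and only if there exist $(z^*,w_0^*)\in\partial f(\bar z,\bar w)$, $v^*\in N(\bar z;\Omega)$, and $x^*\in X^*$ with $(0,w^*)=(z^*,w_0^*)+(v^*,0)+(M^*x^*,-T^*x^*)$; equating the $Z^*$-components gives $M^*(-x^*)=z^*+v^*$, that is, $-x^*\in(M^*)^{-1}(z^*+v^*)$, while equating the $W^*$-components gives $w^*=w_0^*+T^*(-x^*)\in w_0^*+T^*\big((M^*)^{-1}(z^*+v^*)\big)$. Since every implication here is reversible, this is exactly \eqref{BT1}, and the identical computation with $\partial^\infty f(\bar z,\bar w)$ in place of $\partial f(\bar z,\bar w)$ yields \eqref{BT1'}. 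I expect the main obstacle to be the sum-rule step: $\delta_{\ker\Phi}$ is nowhere continuous and the domain of $f+\delta_{\Omega\times W}$ need not meet $\ker\Phi$ in its interior, so the elementary interior-point form of Moreau--Rockafellar does not apply directly; one must instead extract an Attouch--Brezis-type qualification from the closedness of the range of $\Phi$ together with $\ker T^*\subset\ker M^*$, which is precisely where the structural hypotheses of the theorem get consumed.
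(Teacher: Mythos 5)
Your reduction is sound as far as it goes: writing $\varphi=f+\delta_{\Omega\times W}+\delta_{\ker\Phi}$, the qualification-free marginal identities $\partial h(\bar w)=\{w^*\mid (0,w^*)\in\partial\varphi(\bar z,\bar w)\}$ and $\partial^\infty h(\bar w)=\{w^*\mid (0,w^*)\in N((\bar z,\bar w);\dom\varphi)\}$, the identifications $N((\bar z,\bar w);\Omega\times W)=N(\bar z;\Omega)\times\{0\}$ and, via the closed range theorem, $N((\bar z,\bar w);\ker\Phi)=(\ker\Phi)^{\perp}={\rm rge}\,\Phi^*=\{(M^*x^*,-T^*x^*)\mid x^*\in X^*\}$, and the final bookkeeping that turns $(0,w^*)=(z^*,w_0^*)+(v^*,0)+(M^*x^*,-T^*x^*)$ into \eqref{BT1} and \eqref{BT1'} are all correct and reversible.

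The genuine gap is exactly the step you defer: the sum rule $\partial\varphi(\bar z,\bar w)=\partial f(\bar z,\bar w)+N((\bar z,\bar w);\Omega\times W)+N((\bar z,\bar w);\ker\Phi)$, together with its singular counterpart (the normal-cone intersection formula for $\dom f\cap(\Omega\times W)\cap\ker\Phi$), is asserted, not proved, and this is where the two structural hypotheses are supposed to be consumed — so the heart of the theorem is missing. It is not a routine citation: continuity of $f$ at $(\bar z,\bar w)$ handles $f+\delta_{\Omega\times W}$, but $\delta_{\ker\Phi}$ is nowhere continuous, $\bar z$ need not lie in ${\rm int}\,\Omega$, and it is not evident that ${\rm gph}\,H$ even meets $W\times{\rm int}\,\Omega$. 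Moreover, ${\rm ker}\,T^*\subset{\rm ker}\,M^*$ only yields ${\rm rge}\,M\subset{\rm cl}\,({\rm rge}\,T)$, and even adding closedness of ${\rm rge}\,\Phi$ does not give the solvability ${\rm rge}\,M\subset{\rm rge}\,T$ (take $M$ the embedding of a closed subspace $V\subset X$ and $T$ with dense, non-closed range in $V$: both hypotheses hold, yet $Mz\notin{\rm rge}\,T$ for many $z$); so neither the interior-point Moreau--Rockafellar condition nor the Attouch--Brezis condition that $\bigcup_{\lambda>0}\lambda\big(\dom(f+\delta_{\Omega\times W})-\ker\Phi\big)$ be a closed subspace follows "precisely" from the stated assumptions without a real argument. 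Note also that the present paper deliberately states the theorem without proof, importing it from \cite{AnToan}; the argument there runs through the value-function/coderivative results for inclusion constraints $z\in G(w)=H(w)\cap\Omega$ (cf. \cite{AnYen,Chieu_Kien_Toan}), where ${\rm ker}\,T^*\subset{\rm ker}\,M^*$ and the closed range of $\Phi$ are used to validate the coderivative computation for $H$ and the required regularity — precisely the verification your sketch postpones.
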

   	 	
 When $f$ is Fr\'echet differentiable at $(\bar z, \bar w)$, it holds that $\partial f(\bar z, \bar w)=\{\nabla f(\bar z, \bar w) \}$. Hence~\eqref{BT1} has a simpler form. Namely, the following statement is valid.
 \begin{theorem}\label{Frechet differentiable}
 	
 Under the assumptions of Theorem \ref{asprogramingproblem}, suppose additionally that the function $f$ is Fr\'echet differentiable at $(\bar z,\bar w)$. Then
 \begin{eqnarray}\label{BT1a}
    	 		\partial h(\bar w)= \bigcup_{v^*\in N(\bar z;
    	 			\Omega)}\big[ \nabla_w f(\bar z, \bar w)+ T^*\big((M^*) ^{-1}(\nabla_z f(\bar z, \bar w) +v^*)\big)\big],
    	 		\end{eqnarray}
 where $\nabla_z f(\bar z, \bar w)$ and $\nabla_w f(\bar{z}, \bar w)$, respectively, stand for the Fr\'echet derivatives of $f(\cdot,\bar w)$ at $\bar z$ and of $f(\bar z,\cdot)$ at $\bar w$.   	 		
 \end{theorem}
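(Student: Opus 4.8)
The plan is to derive Theorem \ref{Frechet differentiable} as an immediate specialization of Theorem \ref{asprogramingproblem}, so the whole argument hinges on one observation about convex analysis: if a proper convex function $f:Z\times W\to\overline{\Bbb{R}}$ is \F{} differentiable at an interior point $(\bar z,\bar w)$ of $\dom f$, then its subdifferential there is the singleton $\partial f(\bar z,\bar w)=\{\nabla f(\bar z,\bar w)\}$, where $\nabla f(\bar z,\bar w)=(\nabla_z f(\bar z,\bar w),\nabla_w f(\bar z,\bar w))$. First I would recall why this holds: \F{} differentiability gives $f(z,w)-f(\bar z,\bar w)=\langle\nabla f(\bar z,\bar w),(z,w)-(\bar z,\bar w)\rangle+o(\|(z,w)-(\bar z,\bar w)\|)$, and convexity upgrades this local linear lower bound into the global one $f(z,w)-f(\bar z,\bar w)\ge\langle\nabla f(\bar z,\bar w),(z,w)-(\bar z,\bar w)\rangle$ for all $(z,w)$, which is exactly the defining inequality \eqref{subdifferential_convex_analysis} for $\nabla f(\bar z,\bar w)\in\partial f(\bar z,\bar w)$; conversely any $(z^*,w^*)\in\partial f(\bar z,\bar w)$ must agree with the \F{} derivative by the usual directional-derivative comparison, forcing the subdifferential to reduce to that single vector.

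Next I would simply substitute $\partial f(\bar z,\bar w)=\{(\nabla_z f(\bar z,\bar w),\nabla_w f(\bar z,\bar w))\}$ into formula \eqref{BT1} of Theorem \ref{asprogramingproblem}. The outer union over $(z^*,w^*)\in\partial f(\bar z,\bar w)$ collapses to the single index $(z^*,w^*)=(\nabla_z f(\bar z,\bar w),\nabla_w f(\bar z,\bar w))$, and the expression inside the braces becomes $\nabla_w f(\bar z,\bar w)+T^*\big((M^*)^{-1}(\nabla_z f(\bar z,\bar w)+v^*)\big)$, with the remaining union taken over $v^*\in N(\bar z;\Omega)$. This yields exactly \eqref{BT1a}. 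Before doing so, I would note that all the hypotheses of Theorem \ref{asprogramingproblem} are assumed to hold, so \eqref{BT1} is available; in particular $f$ is continuous at $(\bar z,\bar w)\in(W\times\Omega)\cap\gph H$, which is consistent with (indeed, a consequence of) $f$ being \F{} differentiable there, and the finiteness of $h$ at $\bar w\in\dom\widehat S$ is retained verbatim.

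The main obstacle—really the only point requiring care rather than routine bookkeeping—is justifying the passage from local \F{} differentiability to the \emph{global} subgradient inequality needed for $\nabla f(\bar z,\bar w)\in\partial f(\bar z,\bar w)$. One clean way to handle it is via the one-sided directional derivative: for convex $f$ and any direction $d=(z-\bar z,w-\bar w)$, the function $t\mapsto\big(f(\bar z+td\cdot(z-\bar z),\ldots)-f(\bar z,\bar w)\big)/t$ is nondecreasing in $t>0$, so its limit as $t\downarrow0$, namely the directional derivative $f'((\bar z,\bar w);d)$, is a lower bound for the difference quotient at $t=1$, i.e. $f(z,w)-f(\bar z,\bar w)\ge f'((\bar z,\bar w);d)$; but \F{} differentiability makes $f'((\bar z,\bar w);d)=\langle\nabla f(\bar z,\bar w),d\rangle$, and the global inequality follows. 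Since this elementary fact about convex functions on a locally convex space is standard (see, e.g., \cite{Ioffe_Tihomirov_1979}), I would state it briefly and cite it rather than prove it in full, keeping the proof of Theorem \ref{Frechet differentiable} to just a few lines: invoke the fact, conclude $\partial f(\bar z,\bar w)=\{\nabla f(\bar z,\bar w)\}$, and specialize \eqref{BT1}.
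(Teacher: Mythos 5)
Your proposal is correct and follows essentially the same route as the paper, which justifies Theorem \ref{Frechet differentiable} by the single remark that \F{} differentiability of the convex function $f$ at $(\bar z,\bar w)$ forces $\partial f(\bar z,\bar w)=\{\nabla f(\bar z,\bar w)\}$ and then substitutes this singleton into \eqref{BT1}. The only difference is that you spell out the standard convex-analysis argument (monotone difference quotients and directional derivatives) behind the singleton claim, which the paper simply asserts.
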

 	  
\section{The main results} \label{section3}
\markboth{\centerline{\it The main results}}{\centerline{\it D.T.V.~An, J.-C.~Yao,
and N.D.~Yen}} \setcounter{equation}{0}
Keeping the notation of Section 2, we consider the linear mappings $\mathcal{A}: X \to X$, $\mathcal{B}: U \to X,$ $\mathcal{M}: X \times U \to X,$ and $\mathcal{T}: W \to X$ which are defined by setting
\begin{align}
\label{mappingA}
\mathcal{A}x:= x- \int_0^{(.)} A(\tau) x(\tau) d \tau,
\end{align}
\begin{align}
\label{mappingB}
\mathcal{B}u:= - \int_0^{(.)} B(\tau) u(\tau) d \tau,
\end{align}
\begin{align}
\label{mappingM}
\mathcal{M}(x,u):=\mathcal{A}x +\mathcal{B}u,
\end{align}
and
\begin{align}
\label{mappingT}
\mathcal{T}(\alpha, \theta):= \alpha+ \int_0^{(.)} C(\tau) \theta(\tau) d \tau,
\end{align}
where the writing $\displaystyle\int_0^{(.)} g (\tau)d \tau$ for a function $g\in L^p([0,1], \Bbb{R}^n )$ abbreviates the function $t \mapsto \displaystyle\int_0^{t} g (\tau)d \tau$, which belongs to $X=W^{1,p}([0,1], \Bbb{R}^n ). $

\medskip
Under the assumption $\mathbf{(A1)}$, we can write the linear ordinary differential equation~\eqref{state_equation} in the integral form
$$x=x(0)+\int_0^{(.)} A(\tau) x(\tau) d \tau+\int_0^{(.)} B(\tau) u(\tau) d \tau+\int_0^{(.)} C(\tau) \theta(\tau) d \tau. $$
Combining this with the initial value in \eqref{initial_value}, one gets
$$x=\alpha+\int_0^{(.)} A(\tau) x(\tau) d \tau+\int_0^{(.)} B(\tau) u(\tau) d \tau+\int_0^{(.)} C(\tau) \theta(\tau) d \tau. $$
Thus, in accordance with \eqref{mappingA}--\eqref{mappingT},  \eqref{map_constraint} can be written as
\begin{equation}
\begin{split}
G(w)&=\left\{(x,u)\in X \times U \mid x= \alpha +\int_0^{(.)} A x d \tau+\int_0^{(.)} B u d \tau+\int_0^{(.)} C \theta d \tau \right\} \\
&=\left\{(x,u)\in X \times U \mid x- \int_0^{(.)} A x d \tau-\int_0^{(.)} B u d \tau= \alpha +\int_0^{(.)} C \theta d \tau \right\}\\
&=\left\{(x,u)\in X \times U \mid \mathcal{M}(x,u)=\mathcal{T}(w)\right\}.
\end{split}
\end{equation}
Hence, the control problem \eqref{objective_Function_Control}--\eqref{control_constraint}  reduces to the mathematical programming problem \eqref{12b}, where the function $J(\cdot)$, the multifunction $G(\cdot)$, and the set $K$ defined by \eqref{opjective_function_J}--\eqref{set_constraint}, play the roles of $f(\cdot)$, $H(\cdot)$, and $\Omega$.

\medspace

We shall need several lemmas.
\begin{lemma}{\rm{(See \cite[Lemma 2.3]{Toan_Kien}})}\label{Au_lemma1} Under the assumption $\mathbf{(A1)}$, the following are valid:
\\{\rm(i)} The linear operators $\mathcal{M}$ in \eqref{mappingM} and $\mathcal{T}$ in \eqref{mappingT} are continuous;
\\{\rm(ii)} $\mathcal{T}^*(a,u)=(a, C^Tu)$ for every $(a,u) \in \Bbb{R}^n \times L^q ([0,1], \Bbb{R}^n);$
\\{\rm(iii)} $\mathcal{M}^*(a,u)= (\mathcal{A}^*(a,u), \mathcal{B}^*(a,u ))$, where 
\begin{align}\label{formula_new}
\mathcal{A}^*(a,u)= \bigg( a- \int_0^{1} A^T(t) u(t) dt,\, u+ \int_0^{(.)} A^T(\tau) u(\tau) d \tau -   \int_0^{1} A^T(t) u(t) dt \bigg),
\end{align}
and $\mathcal{B}^*(a,u)=-B^Tu$ 
for every $(a,u)\in \Bbb{R}^n \times L^q ([0,1], \Bbb{R}^n).$
\end{lemma}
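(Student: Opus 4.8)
The plan is to prove the three assertions by a direct computation of the adjoint operators, treating $\mathcal M$ and $\mathcal T$ as compositions of more elementary continuous linear maps. First I would establish the continuity claim in (i). The operator $\mathcal T$ splits as $(\alpha,\theta)\mapsto \alpha$ (the constant-function embedding $\Bbb R^n\hookrightarrow X$) plus the Volterra-type operator $\theta\mapsto\int_0^{(\cdot)}C(\tau)\theta(\tau)\,d\tau$; since $C$ is essentially bounded by (A1), this latter operator maps $L^p$ into $X=W^{1,p}$ continuously because its derivative is exactly $C(\cdot)\theta(\cdot)\in L^p$ with $\|C\theta\|_p\le \|C\|_\infty\|\theta\|_p$, and the value at $0$ is $0$. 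The same reasoning gives continuity of $\mathcal A$ and $\mathcal B$, hence of $\mathcal M=\mathcal A\circ\mathrm{pr}_X+\mathcal B\circ\mathrm{pr}_U$. I would then cite that this is exactly \cite[Lemma 2.3]{Toan_Kien}, so for a self-contained write-up it suffices to recall the argument briefly.

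Next, for (ii) I would compute $\langle \mathcal T^*(a,u),(\alpha,\theta)\rangle=\langle (a,u),\mathcal T(\alpha,\theta)\rangle$ using the representation of functionals on $X$ recalled in Section 2, namely $\langle (a,u),x\rangle=\langle a,x(0)\rangle+\int_0^1 \dot x(t)u(t)\,dt$. Since $\mathcal T(\alpha,\theta)(0)=\alpha$ and $\frac{d}{dt}\mathcal T(\alpha,\theta)(t)=C(t)\theta(t)$, the pairing becomes $\langle a,\alpha\rangle+\int_0^1 u(t)^TC(t)\theta(t)\,dt=\langle a,\alpha\rangle+\int_0^1 (C(t)^Tu(t))^T\theta(t)\,dt$, which identifies $\mathcal T^*(a,u)=(a,C^Tu)\in\Bbb R^n\times L^q([0,1],\Bbb R^m)$ as claimed. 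The formula $\mathcal B^*(a,u)=-B^Tu$ follows the same way, noting $\mathcal B u(0)=0$ and $\frac{d}{dt}\mathcal B u(t)=-B(t)u(t)$, so $\langle (a,u),\mathcal B\tilde u\rangle=-\int_0^1 u(t)^TB(t)\tilde u(t)\,dt$.

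The only genuinely delicate point is the formula \eqref{formula_new} for $\mathcal A^*$, because $\mathcal A x(t)=x(t)-\int_0^t A(\tau)x(\tau)\,d\tau$ depends on all of $x$, both through $x(0)$ and through $\dot x$ (we have $\mathcal A x(0)=x(0)$ and $\frac{d}{dt}\mathcal A x(t)=\dot x(t)-A(t)x(t)$, but $x(t)=x(0)+\int_0^t\dot x(\tau)\,d\tau$ must itself be re-expressed in terms of the data $(x(0),\dot x)$ of $x$ in $X$). I would write $\langle(a,u),\mathcal A x\rangle=\langle a,x(0)\rangle+\int_0^1 u(t)^T\bigl(\dot x(t)-A(t)x(t)\bigr)\,dt$, substitute $x(t)=x(0)+\int_0^t\dot x(\tau)\,d\tau$ into the $A(t)x(t)$ term, and apply Fubini to the resulting double integral $\int_0^1 u(t)^TA(t)\!\int_0^t\dot x(\tau)\,d\tau\,dt$ to swap the order and collect the coefficient of $\dot x(\tau)$, which is $\int_\tau^1 A^T(t)u(t)\,dt$. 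Regrouping the coefficient of $x(0)$ and the coefficient of $\dot x$ separately, and rewriting $\int_\tau^1 A^T(t)u(t)\,dt=\int_0^1 A^T(t)u(t)\,dt-\int_0^\tau A^T(t)u(t)\,dt$, yields precisely the two components displayed in \eqref{formula_new}. The main obstacle is bookkeeping: correctly tracking the Fubini swap and the split of $[0,1]$ into $[0,\tau]$ and $[\tau,1]$, and confirming that the second component indeed lies in $L^q([0,1],\Bbb R^n)$ — which it does, since $A^Tu$ is integrable and its indefinite integral is absolutely continuous hence bounded. Finally (iii) is just the assembly $\langle \mathcal M^*(a,u),(x,\tilde u)\rangle=\langle(a,u),\mathcal A x+\mathcal B\tilde u\rangle=\langle\mathcal A^*(a,u),x\rangle+\langle\mathcal B^*(a,u),\tilde u\rangle$, giving $\mathcal M^*(a,u)=(\mathcal A^*(a,u),\mathcal B^*(a,u))$.
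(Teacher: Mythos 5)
Your proposal is correct. The paper itself gives no proof of this lemma --- it is imported verbatim from \cite[Lemma 2.3]{Toan_Kien} --- so there is no argument to compare against, but your direct computation is exactly the standard verification: using the representation $\langle (a,u),x\rangle=\langle a,x(0)\rangle+\int_0^1 \dot x(t)u(t)\,dt$ of functionals on $W^{1,p}$, reading off the data $\big((\mathcal Ax)(0),\tfrac{d}{dt}\mathcal Ax\big)=\big(x(0),\dot x-Ax\big)$ (and similarly for $\mathcal B$, $\mathcal T$), substituting $x(t)=x(0)+\int_0^t\dot x(\tau)\,d\tau$, and applying Fubini to the term $\int_0^1 u(t)^TA(t)\int_0^t\dot x(\tau)\,d\tau\,dt$ does produce the coefficient $\int_\tau^1 A^T(t)u(t)\,dt$ of $\dot x(\tau)$, whose splitting as $\int_0^1-\int_0^\tau$ yields precisely \eqref{formula_new}; the essential boundedness from $\mathbf{(A1)}$ justifies both the continuity estimates and the Fubini step. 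The only blemish is a typo: the second component of $\mathcal T^*(a,u)=(a,C^Tu)$ lives in $L^q([0,1],\Bbb R^k)$, not $L^q([0,1],\Bbb R^m)$, since $C^T$ is $k\times n$.
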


\begin{lemma} {\rm{(See \cite[Lemma 3.1 (b)]{Toan_Kien}})}
\label{Au_Lemma2} If $\mathbf{(A1)}$ and $\mathbf{(A2)}$ are satisfied, then the functional  $J: X \times U\times W \to \Bbb{R}$ is Fr\'echet differentiable at $(\bar z, \bar w)$ and $\nabla J(\bar z, \bar w)$ is given by
$$\nabla_w J(\bar z, \bar w)= \big (0_{\Bbb{R}^n}, L_\theta(\cdot, \bar x(\cdot), \bar u(\cdot), \bar \theta(\cdot) )\big ),$$
$$\nabla_z J(\bar z, \bar w)= \big(J_x(\bar x, \bar u,\bar \theta), J_u( \bar x, \bar u, \bar \theta )\big),$$
with 
\begin{align*} J_x( \bar x, \bar u, \bar \theta )&=\bigg( g'(\bar x(1)) + \int_0^1 L_x(t,\bar x(t),\bar u(t), \bar \theta(t))dt ,\\
& \quad \quad g'(\bar x(1)) + \int_{(.)}^1 L_x(\tau,\bar x(\tau),\bar u(\tau), \bar \theta(\tau))d\tau  \bigg),
 \end{align*}
 and $J_u( \bar x, \bar u, \bar \theta )=L_u(\cdot, \bar x(\cdot), \bar u(\cdot), \bar \theta(\cdot) ).$
 \end{lemma}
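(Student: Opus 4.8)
The plan is to decompose $J$ into a terminal-cost term and an integral-cost term and to differentiate each. Write $J(x,u,w)=g(Ex)+\Phi(\iota x,u,\theta)$ (with $w=(\alpha,\theta)$), where $E:X\to\Bbb{R}^n$, $Ex=x(1)$, is a continuous linear operator (by the embedding $W^{1,p}\hookrightarrow C([0,1],\Bbb{R}^n)$), $\iota:X=W^{1,p}([0,1],\Bbb{R}^n)\to L^p([0,1],\Bbb{R}^n)$ is the canonical (continuous) embedding, and $\Phi:L^p([0,1],\Bbb{R}^n)\times U\times\Theta\to\Bbb{R}$ is the integral functional $\Phi(\xi,\eta,\zeta)=\int_0^1 L(t,\xi(t),\eta(t),\zeta(t))\,dt$. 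Since $J$ does not depend on $\alpha$, the $\Bbb{R}^n$-slot of $\nabla_wJ$ is $0_{\Bbb{R}^n}$. For the terminal term, $g$ is $C^1$ by $\mathbf{(A2)}$ and $E$ is bounded linear, so $x\mapsto g(Ex)$ is Fr\'echet differentiable on $X$ with derivative $h\mapsto\langle g'(\bar x(1)),h(1)\rangle$; using $h(1)=h(0)+\int_0^1\dot h(\tau)\,d\tau$ and the representation $(W^{1,p}([0,1],\Bbb{R}^n))^*=\Bbb{R}^n\times L^q([0,1],\Bbb{R}^n)$ recalled in Section~2, this functional is identified with the pair $\big(g'(\bar x(1)),\,g'(\bar x(1))\mathbf{1}_{[0,1]}\big)$, whose $L^q$-component is the constant function equal to $g'(\bar x(1))$.

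The crux is to prove that $\Phi$ is (jointly) Fr\'echet differentiable at $(\bar x,\bar u,\bar\theta)$ with
\[ \Phi'(\bar x,\bar u,\bar\theta)(\xi,\eta,\zeta)=\int_0^1\Big(\langle L_x,\xi(t)\rangle+\langle L_u,\eta(t)\rangle+\langle L_v,\zeta(t)\rangle\Big)\,dt, \]
all partials being evaluated at $(t,\bar x(t),\bar u(t),\bar\theta(t))$. Here $\mathbf{(A2)}$ enters in full: the bound $|L(t,x,u,v)|\le c_1(w_1(t)+\|x\|^{p_1}+\|u\|^{p_1}+\|v\|^{p_1})$ with $p_1\le p$ makes $\Phi$ finite on the stated product (by Jensen/Holder on the measure-one interval), while $\max\{|L_x|,|L_u|,|L_v|\}\le c_2(\|x\|^{p_2}+\|u\|^{p_2}+\|v\|^{p_2})+r$ with $p_2\le p-1$, hence $p_2q\le p$, combined with Holder's inequality yields $t\mapsto L_x(t,\bar x(t),\bar u(t),\bar\theta(t))\in L^q([0,1],\Bbb{R}^n)$ and likewise for $L_u,L_v$; thus the displayed form is a continuous linear functional, represented in $(L^p)^*=L^q$ by the triple $\big(L_x(\cdot,\bar x,\bar u,\bar\theta),L_u(\cdot,\bar x,\bar u,\bar\theta),L_v(\cdot,\bar x,\bar u,\bar\theta)\big)$. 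For the remainder I would apply the classical mean value theorem pointwise in $t$ to write $L(t,a+b)-L(t,a)=\langle\nabla_{(x,u,v)}L(t,a+s_tb),b\rangle$ with $a=(\bar x,\bar u,\bar\theta)(t)$, $b=(\xi,\eta,\zeta)(t)$ and suitable $s_t\in(0,1)$, subtract the candidate derivative, apply Holder's inequality, and invoke the continuity of the superposition (Nemytskii) operator generated by $\nabla_{(x,u,v)}L$ as a map $L^p\to L^q$ — a Krasnoselskii-type fact valid under the above growth bound — to conclude that the remainder is $o(\|(\xi,\eta,\zeta)\|)$. This is exactly \cite[Lemma~3.1(b)]{Toan_Kien}, whose argument I would follow.

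It then remains to assemble the pieces. By the second step and the chain rule, precomposing $\Phi$ with the bounded linear operator $(x,u,\alpha,\theta)\mapsto(\iota x,u,\theta)$ yields a Fr\'echet differentiable function at $(\bar z,\bar w)$, and adding the Fr\'echet differentiable terminal part $g(Ex)$ shows $J$ is Fr\'echet differentiable at $(\bar z,\bar w)$; it remains only to identify the derivative. The $x$-derivative of $x\mapsto\Phi(\iota x,\bar u,\bar\theta)$ is $h\mapsto\int_0^1\langle L_x(t,\bar x(t),\bar u(t),\bar\theta(t)),h(t)\rangle\,dt$; writing $h(t)=h(0)+\int_0^t\dot h(\tau)\,d\tau$ and using Fubini's theorem to interchange the order of integration identifies this functional, in $\Bbb{R}^n\times L^q([0,1],\Bbb{R}^n)$, with the pair $\big(\int_0^1 L_x(t,\bar x(t),\bar u(t),\bar\theta(t))\,dt,\ \int_{(\cdot)}^1 L_x(\tau,\bar x(\tau),\bar u(\tau),\bar\theta(\tau))\,d\tau\big)$; adding the contribution of the terminal term from the first step gives the stated $J_x(\bar x,\bar u,\bar\theta)$. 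For the $u$- and $\theta$-variables no Sobolev representation is needed, since $U=\Theta=L^p$ and no extra $\Bbb{R}^n$-type factor appears in the dual, so one reads off directly $J_u(\bar x,\bar u,\bar\theta)=L_u(\cdot,\bar x(\cdot),\bar u(\cdot),\bar\theta(\cdot))\in L^q([0,1],\Bbb{R}^m)$ and $\nabla_\theta J(\bar z,\bar w)=L_v(\cdot,\bar x(\cdot),\bar u(\cdot),\bar\theta(\cdot))$ — the function written $L_\theta(\cdot,\bar x(\cdot),\bar u(\cdot),\bar\theta(\cdot))$ in the statement — while the $\alpha$-component vanishes. This gives the asserted $\nabla_wJ(\bar z,\bar w)$ and $\nabla_zJ(\bar z,\bar w)$.

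The main obstacle is the second step. G\^ateaux differentiability of $\Phi$ is easy (differentiate under the integral sign), but upgrading it to Fr\'echet differentiability requires the uniform remainder estimate, which in turn rests on the continuity of the superposition operator associated with $\nabla_{(x,u,v)}L$ from $L^p$ into $L^q$; this is exactly where the exponent restriction $0\le p_2\le p-1$ (equivalently $p_2q\le p$) in $\mathbf{(A2)}$ is indispensable. The Fubini interchange in the assembly step — needed to produce the second ($L^q$) component of $J_x(\bar x,\bar u,\bar\theta)$ — is the only other point demanding a little care.
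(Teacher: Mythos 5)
The paper itself gives no proof of this lemma: it is imported verbatim from \cite[Lemma 3.1(b)]{Toan_Kien}, so there is no in-paper argument to compare against, and your proposal essentially reconstructs the standard proof of that cited result. It is correct in substance: the splitting $J(x,u,w)=g(Ex)+\Phi(\iota x,u,\theta)$, the identification of the resulting functionals in $\Bbb{R}^n\times L^q([0,1],\Bbb{R}^n)$ via $h(1)=h(0)+\int_0^1\dot h(\tau)\,d\tau$ and a Fubini interchange (which is what produces the second, $\int_{(\cdot)}^1 L_x\,d\tau$, component of $J_x$), and the use of the growth bound with $p_2\le p-1$ to make the superposition operator of $\nabla_{(x,u,v)}L$ act continuously from $L^p$ into $L^q$ are exactly the ingredients of that proof. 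Two small refinements: (i) the pointwise mean value theorem yields a point $s_t$ whose measurability in $t$ is not automatic; it is cleaner to use the integral form $L(t,a(t)+b(t))-L(t,a(t))=\int_0^1\langle\nabla_{(x,u,v)}L(t,a(t)+sb(t)),b(t)\rangle\,ds$, after which H\"older's inequality and continuity of the Nemytskii operator, applied uniformly over $s\in[0,1]$ because $\|sb\|_p\le\|b\|_p$, give the required $o(\|(\xi,\eta,\zeta)\|)$ remainder; (ii) the Krasnoselskii-type continuity you invoke needs $q<\infty$, i.e.\ $p>1$ --- for $p=1$ Fr\'echet differentiability of the integral term on $L^1$ can fail even with bounded $\nabla L$ --- but this limitation is inherent in the cited lemma and not introduced by your argument.
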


Let 
\begin{align*}
& \varPsi_A: L^q([0,1], \Bbb{R}^n ) \to \Bbb{R}, \quad \varPsi_B: L^q([0,1], \Bbb{R}^n ) \to L^q([0,1], \Bbb{R}^m ), \\
&\varPsi_C: L^q([0,1], \Bbb{R}^n ) \to L^q([0,1], \Bbb{R}^k ),\quad \varPsi :L^q([0,1], \Bbb{R}^n) \to L^q([0,1], \Bbb{R}^n )
\end{align*}
 be defined by 
 \begin{align*}
 \varPsi_A(v)= \int_0^1 A^T(t) v(t)dt,\quad \varPsi_B(v)(t)=-B^T(t)v(t) \ \, \mbox{a.e.}\ t \in [0,1],\\
 \varPsi_C(v)(t)=C^T(t)v(t)\ \, \mbox{a.e.} \  t \in [0,1], \quad \varPsi(v)=-\int_0^{(.)} A^T(\tau) v(\tau) d\tau.
  \end{align*}
  
    We will employ the following two assumptions.
  
  \begin{description}
 \item[(A3)]
Suppose that 
\begin{align}\label{re_con}
{\rm \ker}\, \varPsi_C \subset \big({\rm \ker}\, \varPsi_A \cap {\rm \ker}\, \varPsi_B \cap {\rm Fix}\, \varPsi\big),
\end{align}
 where ${\rm Fix}\, \varPsi:=\{x \in X \mid \varPsi(x)=x\}$ is the set of the fixed points of $\varPsi$, and ${\rm \ker}\, \varPsi_A$ (resp., ${\rm \ker}\, \varPsi_B$, ${\rm \ker}\, \varPsi_C$) denotes the kernel of $\varPsi_A$ (resp., $\varPsi_B$, $\varPsi_C$).
\item[(A4)] The operator $\varPhi: W \times Z \to X$, which is given by
$$\varPhi(w,z)=x - \int_0^{(.)} A(\tau) x(\tau) d \tau - \int_0^{(.)} B(\tau) v(\tau) d \tau -\alpha - \int_0^{(.)} C(\tau) \theta(\tau) d \tau $$
for every $ w=(\alpha, \theta) \in W$ and $z=(x,v) \in Z,$
has closed range. 
\end{description}
\begin{lemma}
\label{Au_lemma3}
If $\mathbf{(A3)}$ is satisfied, then ${\rm \ker}\, \mathcal{T}^* \subset {\rm \ker}\, \mathcal{M}^*.$
\end{lemma}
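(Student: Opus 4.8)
The plan is to unwind the definitions of the operators $\mathcal{M}^*$ and $\mathcal{T}^*$ provided by Lemma~\ref{Au_lemma1}, and to show that if $(a,u)\in\ker\mathcal{T}^*$ then the hypothesis \textbf{(A3)} forces $(a,u)\in\ker\mathcal{M}^*$. First I would take an arbitrary $(a,u)\in\Bbb{R}^n\times L^q([0,1],\Bbb{R}^n)$ lying in $\ker\mathcal{T}^*$. By Lemma~\ref{Au_lemma1}(ii), $\mathcal{T}^*(a,u)=(a,C^Tu)$, so $\mathcal{T}^*(a,u)=0$ means exactly $a=0_{\Bbb{R}^n}$ and $C^T(t)u(t)=0$ a.e.\ on $[0,1]$. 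The second condition says precisely that $\varPsi_C(u)=0$, i.e.\ $u\in\ker\varPsi_C$; the first condition records $a=0$ for later use.

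Next I would invoke \textbf{(A3)}: since $u\in\ker\varPsi_C$, the inclusion \eqref{re_con} gives $u\in\ker\varPsi_A$, $u\in\ker\varPsi_B$, and $u\in\mathrm{Fix}\,\varPsi$. Translating these via the definitions of $\varPsi_A,\varPsi_B,\varPsi$: $\int_0^1 A^T(t)u(t)\,dt=0$, $B^T(t)u(t)=0$ a.e., and $-\int_0^{(.)}A^T(\tau)u(\tau)\,d\tau=u$. Now I would substitute into the formula for $\mathcal{M}^*(a,u)=(\mathcal{A}^*(a,u),\mathcal{B}^*(a,u))$ from Lemma~\ref{Au_lemma1}(iii) with $a=0$. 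The component $\mathcal{B}^*(0,u)=-B^Tu$ vanishes because $B^T(t)u(t)=0$ a.e. For $\mathcal{A}^*(0,u)$, the first coordinate is $0-\int_0^1 A^T(t)u(t)\,dt=0$ by $u\in\ker\varPsi_A$. The second coordinate is $u+\int_0^{(.)}A^T(\tau)u(\tau)\,d\tau-\int_0^1 A^T(t)u(t)\,dt$; the last integral is $0$ by $u\in\ker\varPsi_A$, and $\int_0^{(.)}A^T(\tau)u(\tau)\,d\tau=-\varPsi(u)=-u$ by $u\in\mathrm{Fix}\,\varPsi$, so the second coordinate is $u+(-u)-0=0$. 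Hence $\mathcal{M}^*(a,u)=0$, i.e.\ $(a,u)\in\ker\mathcal{M}^*$, which is what we wanted.

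I do not anticipate a genuine obstacle here: the lemma is essentially a bookkeeping exercise once Lemma~\ref{Au_lemma1} is in hand. The only point requiring a little care is matching the three pieces of the right-hand side of \eqref{re_con} to the three algebraic identities that make the two coordinates of $\mathcal{A}^*$ and the single expression $\mathcal{B}^*$ collapse to zero — in particular noticing that the same integral $\int_0^1 A^T(t)u(t)\,dt$ appears twice in the second coordinate of $\mathcal{A}^*(a,u)$ and that both occurrences are killed by $u\in\ker\varPsi_A$, while the fixed-point condition is exactly what cancels the remaining $u$ against $\int_0^{(.)}A^T(\tau)u(\tau)\,d\tau$. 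A secondary formality is confirming that the element $u\in L^q([0,1],\Bbb{R}^n)$ is simultaneously regarded, without abuse, as the argument of $\varPsi$ whose values lie in $X$ and as an $L^q$-function in the Fix condition; this is harmless since $\mathrm{Fix}\,\varPsi$ is defined inside $X$ and $\varPsi(u)\in X$ coincides as a function with $u$ when the identity $-\int_0^{(.)}A^T(\tau)u(\tau)\,d\tau=u$ holds.
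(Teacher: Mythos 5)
Your proposal is correct and follows essentially the same route as the paper's proof: identify $a=0$ and $u\in\ker\varPsi_C$ from $\mathcal{T}^*(a,u)=0$, invoke \textbf{(A3)}, and use the three resulting identities together with the formulas of Lemma~\ref{Au_lemma1} to see that both components of $\mathcal{M}^*(a,u)$ vanish. Your coordinate-by-coordinate verification of $\mathcal{A}^*(0,u)=0$ is just a slightly more explicit rendering of the same computation.
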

\begin{proof}
For any $(a,v) \in {\rm \ker}\, \mathcal{T}^*$, it holds that $(a,v) \in L^q ([0,1], \Bbb{R}^n)$ and $\mathcal{T}^*(a,v)=0$. Then $(a, C^Tv)=0$. So $a=0$ and $C^Tv=0$. The latter means that $C^T(t)v(t)=0$ a.e. on $[0,1]$. Hence $v \in {\rm \ker}\, \varPsi_C.$ By \eqref{re_con}, $$ v \in {\rm \ker}\, \varPsi_A \cap {\rm \ker}\, \varPsi_B \cap {\rm Fix}\, \varPsi. $$
The condition $ \varPsi_B(v)=0$ implies that $-B^T(t)v(t)=0$ a.e. on $[0,1]$. As $\mathcal{B}^*(a,v)=-B^Tv$ by Lemma \ref{Au_lemma1}, this yields
\begin{align}
\label{re_con1}
\mathcal{B}^*(a,v)=0.
\end{align}
According to the condition $v \in {\rm \ker}\, \varPsi_A$, we have $\displaystyle\int_0^1 A^T(t)v(t)dt=0$. Lastly, the condition $v \in {\rm Fix}\, \varPsi $ implies 
$v=-\displaystyle\int_0^{(.)} A^T(\tau) v(\tau) d \tau,$
hence $v+\displaystyle\int_0^{(.)} A^T(\tau) v(\tau) d \tau=0$. Consequently, using formula \eqref{formula_new}, we can assert that 
\begin{align}
\label{re_con2}
\mathcal{A}^*(a,v)=0.
\end{align}
Since $\mathcal{M}^*(a,v)=(\mathcal{A}^*(a,v), \mathcal{B}^*(a,v) )$ by Lemma \ref{Au_lemma1}, from \eqref{re_con1} and \eqref{re_con2} it follows that $\mathcal{M}^*(a,v)=0$. Thus, we have shown that ${\rm \ker}\, \mathcal{T}^* \subset {\rm \ker}\, \mathcal{M}^*.$
\end{proof}

The assumption $(H_3)$ in \cite{Toan_Thuy} can be stated as follows
\begin{description}
\item[(A5)] There exists a constant $c_3>0$ such that, for every $ v \in \Bbb{R}^n$,
$$||C^T(t)v|| \ge c_3 ||v||\ \, \mbox{a.e.} \ t \in [0,1]. $$
\end{description}
 \begin{Proposition}\label{Proposition_condition}
 If $\mathbf{(A5)}$ is satisfied, then $\mathbf{(A3)}$ and $\mathbf{(A4)}$ are fulfilled.
 \end{Proposition}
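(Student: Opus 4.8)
The plan is to establish the two assertions separately: $\mathbf{(A3)}$ will follow from an elementary pointwise estimate, and $\mathbf{(A4)}$ from the stronger fact that $\varPhi$ is onto (a surjective bounded operator trivially has closed range).

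For $\mathbf{(A3)}$, I would first show that $\mathbf{(A5)}$ forces ${\rm ker}\,\varPsi_C=\{0\}$. Indeed, if $v\in L^q([0,1],\Bbb{R}^n)$ satisfies $\varPsi_C(v)=0$, then $C^T(t)v(t)=0$ for almost every $t$, and applying $\mathbf{(A5)}$ to the vector $v(t)$ gives $c_3\|v(t)\|\le\|C^T(t)v(t)\|=0$ a.e., so $v=0$. Since each of the linear maps $\varPsi_A$, $\varPsi_B$, $\varPsi$ sends the zero function to the zero function, the zero function belongs to ${\rm ker}\,\varPsi_A\cap{\rm ker}\,\varPsi_B\cap{\rm Fix}\,\varPsi$; hence the inclusion \eqref{re_con} holds trivially and $\mathbf{(A3)}$ is verified.

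For $\mathbf{(A4)}$, the key consequence of $\mathbf{(A5)}$ is that the symmetric matrix $C(t)C^T(t)$ satisfies $\langle C(t)C^T(t)\xi,\xi\rangle=\|C^T(t)\xi\|^2\ge c_3^2\|\xi\|^2$ for all $\xi\in\Bbb{R}^n$ and a.e. $t$, so it is invertible with $\|(C(t)C^T(t))^{-1}\|\le c_3^{-2}$ a.e. I would then prove surjectivity of $\varPhi$ directly: given an arbitrary $y\in X=W^{1,p}([0,1],\Bbb{R}^n)$, set $x:=0$, $v:=0$, $\alpha:=-y(0)$, and
$$\theta(t):=-C^T(t)\big(C(t)C^T(t)\big)^{-1}\dot y(t)\quad\mbox{a.e. }t\in[0,1].$$
Using $\mathbf{(A1)}$ (which makes $\|C(t)\|$ essentially bounded, say by $\gamma$) one obtains $\|\theta(t)\|\le\gamma c_3^{-2}\|\dot y(t)\|$ a.e.; together with the measurability of $t\mapsto\theta(t)$ (the matrix-valued map $t\mapsto C(t)C^T(t)$ is measurable and inversion is continuous on the invertible matrices) and $\dot y\in L^p$, this yields $\theta\in L^p([0,1],\Bbb{R}^k)$, i.e. $w:=(\alpha,\theta)\in W$. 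Since $C(t)\theta(t)=-\dot y(t)$ a.e., for every $t\in[0,1]$ one computes
$$\varPhi\big((\alpha,\theta),(x,v)\big)(t)=-\alpha-\int_0^t C(\tau)\theta(\tau)\,d\tau=y(0)+\int_0^t\dot y(\tau)\,d\tau=y(t),$$
so $\varPhi(w,z)=y$. Thus $\varPhi$ is onto, its range equals $X$ and is therefore closed, proving $\mathbf{(A4)}$.

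Almost everything here is a direct computation; the one point that deserves care is the verification that the explicitly defined $\theta$ is measurable and $p$-integrable, which is where assumption $\mathbf{(A1)}$ enters. (Alternatively, surjectivity of $\varPhi$ could be deduced from the observation that the bounded linear operator $(\alpha,\theta)\mapsto-\alpha-\int_0^{(.)}C\theta\,d\tau$ already maps $W$ onto $X$, but exhibiting the pseudo-inverse above makes the argument fully explicit.)
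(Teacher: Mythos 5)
Your argument is correct, and your treatment of $\mathbf{(A3)}$ coincides with the paper's: both deduce from $\mathbf{(A5)}$ that ${\rm ker}\,\varPsi_C=\{0\}$, after which the inclusion \eqref{re_con} holds trivially because the zero function lies in ${\rm ker}\,\varPsi_A\cap{\rm ker}\,\varPsi_B\cap{\rm Fix}\,\varPsi$. Where you genuinely diverge is $\mathbf{(A4)}$. The paper also reduces the matter to surjectivity of the ``$\mathcal{T}$-part'' of $\varPhi$, but it argues dually: using Lemma~\ref{Au_lemma1}(ii) it estimates $\|\mathcal{T}^*(a,v)\|=\|a\|+\|C^Tv\|_q\ge \min\{1,c_3\}\big(\|a\|+\|v\|_q\big)$, invokes Rudin's criterion (an adjoint bounded below from zero forces $\mathcal{T}$ to be surjective), and concludes that $\{\varPhi(w,0)\mid w\in W\}=X$, so the range is closed. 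You instead exhibit an explicit preimage: from $\langle C(t)C^T(t)\xi,\xi\rangle\ge c_3^2\|\xi\|^2$ you invert $C(t)C^T(t)$ with a uniform bound and set $\theta(t)=-C^T(t)\big(C(t)C^T(t)\big)^{-1}\dot y(t)$, $\alpha=-y(0)$, $z=(0,0)$, which indeed yields $\varPhi\big((\alpha,\theta),(0,0)\big)=y$; the measurability and $L^p$-integrability checks via $\mathbf{(A1)}$ are exactly the delicate points, and you address them. Your route is more elementary and constructive---it needs neither the adjoint formula for $\mathcal{T}^*$ nor the closed-range/open-mapping machinery behind Rudin's theorem---at the cost of the pointwise pseudo-inverse computation, while the paper's route is shorter once Lemma~\ref{Au_lemma1} and that functional-analytic criterion are available. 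One small shared point (present in the paper's proof as well): since the exceptional null set in $\mathbf{(A5)}$ may a priori depend on the vector $v\in\Bbb{R}^n$, one should first pass to a single full-measure set (e.g.\ via a countable dense family of directions and continuity in $v$) before applying the inequality to $v(t)$ or to $\xi$ depending on $t$. Finally, your closing parenthetical is slightly circular as phrased, since the claim that $(\alpha,\theta)\mapsto-\alpha-\int_0^{(.)}C\theta\,d\tau$ maps $W$ onto $X$ is precisely what the pseudo-inverse construction establishes, but this does not affect the validity of your proof.
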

\begin{proof}
By $\mathbf{(A5)}$ and the definition of $\varPsi_C$, for any $v \in L^q([0,1], \Bbb{R}^n)$, one has
\begin{align*}
|| \varPsi_C (v)(t)|| \ge c_3 ||v(t)||\ \;\mbox{a.e.} \ t \in [0,1].
\end{align*}
So, if $\varPsi_C(v)=0$, i.e., $\varPsi_C(v)(t)=0$ a.e. $t \in [0,1]$, then $v(t)=0$ a.e. $t \in [0,1]$.
This means that ${\rm \ker}\, \varPsi_C=\{0\}$. Therefore, the condition \eqref{re_con} in $\mathbf{(A3)}$ is satisfied.

By Lemma \ref{Au_lemma1}, we have $\mathcal{T}^*(a,v)=(a, C^Tv)$ for every $(a,v) \in \Bbb{R}^n \times L^q ([0,1], \Bbb{R}^n)$. It follows that
\begin{align}
\label{formula2n}
||\mathcal{T}^*(a,v)||=||a||+||C^Tv||_q.
\end{align}
Using $\mathbf{(A5)}$, we get
\begin{align*}
||\mathcal{T}^*(a,v)||&=||a||+||C^Tv||_q \ge c_1 (||a||+||v||_q ),
\end{align*}
where $c_1=\min\{1, c_3\}.$ This means $||\mathcal{T}^*w^*|| \ge c_1 ||w^*||$ for every $w^* \in W^*$. By \cite[Theorem 4.13, p. 100]{Rudin_1991}, $\mathcal{T}:W \to X$ is surjective. Since $\varPhi(w,z)$ can be rewritten as $\varPhi(w,z)=-\mathcal{T}w+ \mathcal{M}z,$ the surjectivity of $\mathcal{T}$ implies that $\{\varPhi (w,0)\mid w\in W \}=X$. Hence $\varPhi$ has closed range.
\end{proof}

We are now in a position to formulate our main results on differential stability of problem \eqref{objective_Function_Control}--\eqref{control_constraint}. The following theorems not only completely describe the subdifferential and the singular subdifferential of the optimal value function, but also explain in detail the process of finding vectors belonging to the subdifferentials. In particular, from the results it follows that each subdifferential is either a singleton or an empty set.

\begin{theorem}
\label{Control_main_theorem}
Suppose that the optimal value function $V$ in \eqref{Re_optimal_value_function} is finite at $\bar w=(\bar \alpha, \bar \theta)$, ${\rm{int}\,} \mathcal{U} \not= \emptyset$, and $\mathbf{(A1)}-\mathbf{(A4)}$ are fulfilled. In addition, suppose that problem \eqref{objective_Function_Control}--\eqref{control_constraint}, with $\bar w=(\bar \alpha, \bar \theta)$ playing the role of $w=(\alpha, \theta)$, has a solution $(\bar x, \bar u).$ Then, a vector $(\alpha^*, \theta^*) \in \Bbb{R}^n \times L^q ([0,1], \Bbb{R}^k)$ belongs to $\partial \, V(\bar\alpha, \bar \theta)$ if and only if
\begin{align}
\label{eq1}
\alpha^*= g'(\bar x(1)) + \int_0^1 L_x(t,\bar x(t),\bar u(t), \bar \theta(t))dt-\int_0^1 A^T(t)y(t)dt,
\end{align}
\begin{align}
\label{eq5}
\theta^* (t) =-C^T (t)y(t)+L_\theta(t,\bar x(t),\bar u(t), \bar \theta(t))\ \, \mbox{a.e.}\ t\in [0,1],
\end{align}
where $y \in W^{1,q} ([0,1],\Bbb{R}^n)$ is the unique solution of the system 
\begin{align}\label{eq3}
\begin{cases}
\dot y (t) +A^T (t)y(t)=L_x(t,\bar x(t),\bar u(t), \bar \theta(t)) \ \rm\mbox{a.e.}\ t\in [0,1],\\
y(1)=-g'( \bar x(1)),
\end{cases}
\end{align}
such that the function $u^* \in L^q([0,1], \Bbb{R}^m)$ defined by
\begin{align}
\label{eq4}
 u^*(t)=B^T (t)y(t)-L_u(t,\bar x(t),\bar u(t), \bar \theta(t)) \ \mbox{a.e.}\ t\in [0,1]
\end{align}
satisfies the condition $u^* \in N(\bar u; \mathcal{U}).$ 
\end{theorem}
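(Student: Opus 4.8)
\noindent\emph{Proof strategy.} The plan is to read problem \eqref{objective_Function_Control}--\eqref{control_constraint} as the instance of the abstract problem \eqref{12b} in which $f=J$, $\Omega=K=X\times\mathcal U$, $M=\mathcal M$, $T=\mathcal T$ (and $\varPhi$ plays the role of $\Phi$), and then to apply Theorem \ref{Frechet differentiable}. First I would check its hypotheses: $\mathbf{(A1)}$ yields the continuity of $\mathcal M$ and $\mathcal T$ via Lemma \ref{Au_lemma1}(i); $\mathbf{(A4)}$ is exactly the closed-range property of $\varPhi$; $\mathbf{(A3)}$ together with Lemma \ref{Au_lemma3} gives ${\rm ker}\,\mathcal T^*\subset{\rm ker}\,\mathcal M^*$; the set $K$ has nonempty interior because ${\rm int}\,K=X\times{\rm int}\,\mathcal U\neq\emptyset$; under $\mathbf{(A1)}$--$\mathbf{(A2)}$ the functional $J$ is convex and, by Lemma \ref{Au_Lemma2}, Fr\'echet differentiable — hence continuous — at $(\bar z,\bar w)$, which belongs to ${\rm gph}\,G$ with $\bar z\in K$ since $(\bar x,\bar u)\in S(\bar w)$ is feasible; and $V$ is finite at $\bar w$ with $\bar w\in{\rm dom}\,\widehat S$ (i.e. $S(\bar w)\neq\emptyset$). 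Theorem \ref{Frechet differentiable} then furnishes
\[
\partial V(\bar w)=\bigcup_{v^*\in N(\bar z;K)}\big[\,\nabla_w J(\bar z,\bar w)+\mathcal T^*\big((\mathcal M^*)^{-1}(\nabla_z J(\bar z,\bar w)+v^*)\big)\,\big].
\]

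Next I would unwind the right-hand side. Because $\bar z=(\bar x,\bar u)$ and $K=X\times\mathcal U$, one has $N(\bar z;K)=\{0_{X^*}\}\times N(\bar u;\mathcal U)$, so every $v^*$ is of the form $v^*=(0,u^*)$ with $u^*\in N(\bar u;\mathcal U)$. Lemma \ref{Au_Lemma2} supplies $\nabla_w J(\bar z,\bar w)=(0_{\Bbb R^n},\,L_\theta(\cdot,\bar x(\cdot),\bar u(\cdot),\bar\theta(\cdot)))$ and $\nabla_z J(\bar z,\bar w)=(J_x,\,J_u)$ with $J_u=L_u(\cdot,\bar x(\cdot),\bar u(\cdot),\bar\theta(\cdot))$ and $J_x$ the pair displayed there. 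Writing a generic element of $(\mathcal M^*)^{-1}(\nabla_z J(\bar z,\bar w)+v^*)$ as $(a,v)\in\Bbb R^n\times L^q([0,1],\Bbb R^n)$ and using the formula for $\mathcal M^*$ in Lemma \ref{Au_lemma1}(iii), the equation $\mathcal M^*(a,v)=\nabla_z J(\bar z,\bar w)+v^*$ decouples into $\mathcal A^*(a,v)=J_x$ and $\mathcal B^*(a,v)=J_u+u^*$, the latter reading $-B^T v=L_u(\cdot,\bar x(\cdot),\bar u(\cdot),\bar\theta(\cdot))+u^*$ a.e. on $[0,1]$.

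The core of the argument is the equation $\mathcal A^*(a,v)=J_x$. By \eqref{formula_new}, its $L^q$-component is
\[
v(t)+\int_0^t A^T(\tau)v(\tau)\,d\tau-\int_0^1 A^T(\tau)v(\tau)\,d\tau=g'(\bar x(1))+\int_t^1 L_x(\tau,\bar x(\tau),\bar u(\tau),\bar\theta(\tau))\,d\tau
\]
for a.e. $t\in[0,1]$. The right-hand side being absolutely continuous, this identity forces $v$ to have an absolutely continuous representative; setting $y:=-v$, differentiation in $t$ gives $\dot y(t)+A^T(t)y(t)=L_x(t,\bar x(t),\bar u(t),\bar\theta(t))$ a.e., and evaluation at $t=1$ gives $y(1)=-g'(\bar x(1))$, while conversely the fundamental theorem of calculus recovers the integral identity from these two conditions. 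Hence the $L^q$-component holds exactly when $y=-v$ is the solution in $W^{1,q}([0,1],\Bbb R^n)$ of \eqref{eq3}, which under $\mathbf{(A1)}$ is unique by the standard theory of linear ordinary differential equations. The $\Bbb R^n$-component of $\mathcal A^*(a,v)=J_x$ then pins down $a$, producing \eqref{eq1}.

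Finally, substituting $v=-y$ into $\mathcal B^*(a,v)=J_u+u^*$ shows that $(\mathcal M^*)^{-1}(\nabla_z J(\bar z,\bar w)+v^*)$ is nonempty precisely when $u^*$ is the function in \eqref{eq4}, in which case it is the singleton $\{(a,-y)\}$. Consequently the union over $N(\bar u;\mathcal U)$ collapses: it is empty unless the function $u^*$ of \eqref{eq4} belongs to $N(\bar u;\mathcal U)$, and in that case it reduces to the single vector $\nabla_w J(\bar z,\bar w)+\mathcal T^*(a,-y)$; computing this with $\mathcal T^*(a,v)=(a,C^T v)$ from Lemma \ref{Au_lemma1}(ii) yields exactly the pair $(\alpha^*,\theta^*)$ with components \eqref{eq1} and \eqref{eq5}, which is the claimed equivalence. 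I expect the main obstacle to be this third step: converting the operator equation $\mathcal A^*(a,v)=J_x$ into the two-point boundary value problem \eqref{eq3}, justifying the differentiation and the unique solvability in $W^{1,q}$, and tracking carefully how the a priori large union over $N(\bar u;\mathcal U)$ degenerates to a single vector or to the empty set.
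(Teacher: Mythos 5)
Your proposal is correct and follows essentially the same route as the paper: apply Theorem \ref{Frechet differentiable} with $f=J$, $\Omega=K$, $M=\mathcal M$, $T=\mathcal T$ (hypotheses verified via Lemmas \ref{Au_lemma1}--\ref{Au_lemma3} and $\mathbf{(A1)}$--$\mathbf{(A4)}$), decompose $N(\bar z;K)=\{0_{X^*}\}\times N(\bar u;\mathcal U)$, and convert the adjoint equation $\mathcal A^*(a,v)=J_x$ into the two-point problem \eqref{eq3} by noting the absolute continuity forced by the integral identity, setting $y=-v$, differentiating, and evaluating at $t=1$, with $\mathcal B^*$ and $\mathcal T^*$ then yielding \eqref{eq4}, \eqref{eq1}, \eqref{eq5}. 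Your observation that the union over $N(\bar u;\mathcal U)$ collapses to a singleton or the empty set matches the paper's remark preceding the theorem, so no gap remains.
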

\begin{proof}
We apply Theorem \ref{Frechet differentiable} in the case where $J(z, w)$, $K$ and $ V(w)$, respectively, play the roles of $f(z,w)$, $\Omega$ and $h(w)$. By Lemmas \ref{Au_Lemma2} and \ref{Au_lemma3}, the conditions $\mathbf{(A1)}-\mathbf{(A4)}$ guarantee that all the assumptions of Theorem \ref{Frechet differentiable} are satisfied. So, we have
\begin{eqnarray}\label{BT1aa}
    	 		\partial V(\bar w)= \bigcup_{v^*\in N(\bar z;K)}\big[ \nabla_w J(\bar z, \bar w)+ \mathcal{T}^*\big((\mathcal{M}^*) ^{-1}(\nabla_z J(\bar z, \bar w) +v^*)\big)\big].
    	 		\end{eqnarray}
  From \eqref{BT1aa}, $(\alpha^*, \theta^*) \in \partial V(\bar w)$ if and only if
 \begin{align}
 \label{17new}
 (\alpha^*, \theta^*) - \nabla_w J(\bar z, \bar w) \in \mathcal{T}^*\big((\mathcal{M}^*) ^{-1}(\nabla_z J(\bar z, \bar w) +v^*)\big)
  \end{align}
 for some $v^* \in N(\bar z; K).$ Note that  $\nabla_w J(\bar z, \bar w)=(0_{\Bbb{R}^n},J_\theta(\bar z, \bar w) )$ and $v^*=(0_{\Bbb{R}^n},u^*)$ for some $u^* \in N(\bar u; \mathcal{U})$. Hence, from \eqref{17new} we get
  	 $$(\alpha^*, \theta^* -  J_\theta(\bar z, \bar w)) \in \mathcal{T}^*\big((\mathcal{M}^*) ^{-1}(\nabla_z J(\bar z, \bar w) +v^*)\big).$$	
Thus, there exists $(a,v) \in \Bbb{R}^n \times L^q ([0,1], \Bbb{R}^n)$	such that 
  	 \begin{align}
  	 \label{formula2}
  	 (\alpha^*, \theta^* -  J_\theta(\bar z, \bar w)) \in \mathcal{T}^*(a,v) \ \; \mbox{and}\ \;\nabla_z J(\bar z, \bar w) +v^*= \mathcal{M}^*(a,v).
  	 \end{align}
  	 By virtue of Lemma \ref{Au_lemma1}, we see that \eqref{formula2} is equivalent to the following
  	 \begin{align}
  	 & \begin{cases}
  	 \alpha^*=a, \ \theta^* -J_\theta (\bar z, \bar w)=C^T(\cdot )v(\cdot ),\\
  	 \big (J_x(\bar x, \bar u, \bar w), J_u(\bar x, \bar u,\bar w)+v^*  \big)=( \mathcal{A}^*(a,v), \mathcal{B}^*(a,v) ).
  	 \end{cases}\nonumber
  	  \end{align}
Invoking Lemma \ref{Au_Lemma2}, we can rewrite this system as	  
  	 \begin{align}
  	  \begin{cases}
  	 \alpha^*=a,\,	 \theta^*= L_\theta(\cdot, \bar x(\cdot ), \bar u(\cdot ), \bar \theta(\cdot )) + C^T(\cdot )v(\cdot ),\\
  	 g'( \bar x(1)) + \displaystyle\int_0^1L_x(t, \bar x(t), \bar u(t), \bar \theta (t)) dt =a - \displaystyle\int_0^1 A^T (t) v(t)dt,\\
  	 g'(\bar x(1) ) + \displaystyle\int _{(.)}^1 L_x (\tau, \bar x(\tau), \bar u (\tau ), \bar \theta(\tau))d \tau 
  	 = v(\cdot )+ \displaystyle\int_0^{(.)} A^T (\tau) v(\tau) d \tau-\displaystyle\int_0^1 A^T(t) v(t) dt,\\
  	 L_u(\cdot ,\bar x(\cdot ), \bar u(\cdot ), \bar \theta(\cdot)) +u^* =-B^T(\cdot )v(\cdot),
  	 \end{cases}\nonumber
  	 	  	   	\end{align}
 Clearly, the latter is equivalent to 	 	  	   	
  	  	\begin{align} \label{formula3}
  	  	 \begin{cases}
  	   	 \alpha^*=a,\,
  	   	 \theta^*= L_\theta(\cdot , \bar x(\cdot ), \bar u(\cdot ), \bar \theta(\cdot )) + C^T(\cdot )v(\cdot ),\\
  	   	 g'( \bar x(1)) + \displaystyle\int_0^1L_x(t, \bar x(t), \bar u(t), \bar \theta (t)) dt =a - \displaystyle\int_0^1 A^T (t) v(t)dt,\\
  	   	 g'(\bar x(1) ) - \displaystyle\int ^{(.)}_1 L_x (\tau, \bar x(\tau), \bar u (\tau ), \bar \theta(\tau))d \tau
  	   	  = v(\cdot)+ \displaystyle\int_1^{(.)} A^T (\tau) v(\tau) d \tau,\\
  	   	 L_u(\cdot ,\bar x(\cdot ), \bar u(\cdot ), \bar \theta(\cdot )) +u^* =-B^T(\cdot )v(\cdot).
  	   	 \end{cases}
  	 \end{align}
The third equality in \eqref{formula3} and the condition $v(\cdot)\in L^q([0,1], \Bbb{R}^n)$ imply that $v(\cdot)$ is absolutely differentiable on $[0,1]$ and, moreover, $\dot v(\cdot)\in L^q([0,1], \Bbb{R}^n)$. Hence $v(\cdot)\in W^{1,q}([0,1], \Bbb{R}^n)$. In addition, the third equality in \eqref{formula3} implies that $v(1)=g'(\bar x(1) )$. Moreover, by differentiating, we get $-\dot v(\cdot) - A^T (\cdot)v(\cdot) =L_x(\cdot, \bar x(\cdot), \bar u(\cdot), \bar \theta(\cdot)).$ Therefore,~\eqref{formula3} can be written as the following
\begin{align}\label{formula4}
  	\begin{cases}
	   	   	 \alpha^*=a,\,
	   	   v\in W^{1,q}([0,1], \Bbb{R}^n),\\	 
	      	\alpha^*= g'( \bar x(1)) + \displaystyle\int_0^1L_x(t, \bar x(t), \bar u(t), \bar \theta (t)) dt + \displaystyle\int_0^1 A^T (t) v(t)dt,\\
	      	\theta^*= L_\theta(\cdot, \bar x(\cdot), \bar u(\cdot), \bar \theta(\cdot)) + C^T(\cdot)v(\cdot) ,\\
	   	   	v(1)=g'(\bar x(1)),\\
	   	   	 -\dot v(\cdot) - A^T (\cdot)v(\cdot) =L_x(\cdot, \bar x(\cdot), \bar u(\cdot), \bar \theta(\cdot)),\\
	   	   	-B^T(\cdot)v(\cdot)= L_u(\cdot,\bar x(\cdot), \bar u(\cdot), \bar \theta(\cdot)) +u^* .
	   	   	 \end{cases}
	   	 \end{align}
Defining $y:=-v$ and omitting the vector $\alpha=\theta^* \in \Bbb{R}^n$, we can put \eqref{formula4} in the form 
	\begin{align*}
	  	  \begin{cases}
		   	   y\in W^{1,q}([0,1], \Bbb{R}^n),\\
		   	   \alpha^*= g'( \bar x(1)) + \displaystyle\int_0^1L_x(t, \bar x(t), \bar u(t), \bar \theta (t)) dt - \displaystyle\int_0^1 A^T (t) y(t)dt,\\
		   	   \theta^*(t)= L_\theta(t, \bar x(t), \bar u(t), \bar \theta(t)) + C^T(t)y(t) \ \,\mbox{a.e.}\ t\in [0,1],\\	 
		  	   	 \dot y(t) + A^T (t)y(t) =L_x(t, \bar x(t), \bar u(t), \bar \theta(t))\ \, \mbox{a.e.}\ t\in [0,1],\\
		  	   	  y(1)=-g'(\bar x(1)),\\
		   	   	B^T(t)y(t)-u^*(t)= L_u(t,\bar x(t), \bar u(t), \bar \theta(t)) \ \, \mbox{a.e.}\ t\in [0,1]. \\
		   	   	 \end{cases}
		   	 \end{align*}  
		   	 The assertion of the theorem follows easily from this system. 
\end{proof}
Next, let us show that how the singular subdifferential of $V(\cdot)$ can be computed.
\begin{theorem}
\label{control_singular_subdifferential} Suppose that all the assumptions of Theorem \ref{Control_main_theorem} are satisfied. Then, a vector $(\alpha^*, \theta^*) \in \Bbb{R}^n \times L^q ([0,1], \Bbb{R}^k)$ belongs to $\partial^\infty V(\bar w)$ if and only if
\begin{align}
\label{equa1}
\alpha^*=\int_0^1 A^T(t) v(t)dt,
\end{align}
\begin{align}
\label{equa2}
\theta^*(t)=C^T(t)v(t)\ \, \mbox{a.e.}\ t\in[0,1], 
\end{align}
where $v \in W^{1,q}([0,1], \Bbb{R}^n )$ is the unique solution of the system
\begin{align}
\label{equa3}
\begin{cases}
\dot v(t)=-A^T(t)v(t) \ \mbox{a.e.} \ t\in[0,1], \\
v(0)=\alpha^*,
\end{cases}
\end{align}
such that the function $u^* \in L^q([0,1], \Bbb{R}^n)$ given by 
\begin{align}
\label{equa4}
u^*(t)=-B^T(t)v(t) \ \mbox{a.e.} \ t\in[0,1]
\end{align}
belongs to $ N(\bar u, \mathcal{U}).$
\end{theorem}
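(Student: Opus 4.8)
The plan is to apply Theorem~\ref{asprogramingproblem} once more, now using its singular-subdifferential formula~\eqref{BT1'} in place of the subdifferential formula that fed the proof of Theorem~\ref{Control_main_theorem}; since Theorem~\ref{Frechet differentiable} has no counterpart for $\partial^\infty$, the \F-differentiability of $J$ will instead be exploited only to compute $\partial^\infty J(\bar z,\bar w)$ by hand. Recall from the beginning of Section~\ref{section3} that problem~\eqref{objective_Function_Control}--\eqref{control_constraint} is the instance of~\eqref{12b} in which $J$, $G$, $K$, $\mathcal{M}$, $\mathcal{T}$ play the roles of $f$, $H$, $\Omega$, $M$, $T$, and that --- as was already checked inside the proof of Theorem~\ref{Control_main_theorem} --- the standing hypotheses ($\mathbf{(A1)}-\mathbf{(A4)}$, ${\rm int}\,\mathcal{U}\neq\emptyset$, finiteness of $V$ at $\bar w$, existence of $(\bar x,\bar u)\in S(\bar w)$) imply all the assumptions of Theorem~\ref{asprogramingproblem}: $\varPhi$ has closed range by $\mathbf{(A4)}$, $\ker\mathcal{T}^*\subset\ker\mathcal{M}^*$ by Lemma~\ref{Au_lemma3}, $J$ is continuous at $(\bar z,\bar w)$ by Lemma~\ref{Au_Lemma2}, $\bar w\in\dom\widehat S$, and ${\rm int}\,K=X\times{\rm int}\,\mathcal{U}\neq\emptyset$. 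Therefore~\eqref{BT1'} yields
$$\partial^\infty V(\bar w)=\bigcup_{(z^*,w^*)\in\partial^\infty J(\bar z,\bar w)}\ \bigcup_{v^*\in N(\bar z;K)}\big[w^*+\mathcal{T}^*\big((\mathcal{M}^*)^{-1}(z^*+v^*)\big)\big].$$

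The first and only genuinely new step is to evaluate $\partial^\infty J(\bar z,\bar w)$. Since $J$ is convex and, by the growth bound on $L$ in $\mathbf{(A2)}$ (recall $p_1\le p$) together with the finiteness of $g$, finite on all of $X\times U\times W$, one has $\dom J=X\times U\times W$, hence $\partial^\infty J(\bar z,\bar w)=N\big((\bar z,\bar w);\dom J\big)=\{(0_{Z^*},0_{W^*})\}$. Substituting this, the displayed union collapses to $\partial^\infty V(\bar w)=\bigcup_{v^*\in N(\bar z;K)}\mathcal{T}^*\big((\mathcal{M}^*)^{-1}(v^*)\big)$, and since $K=X\times\mathcal{U}$ forces $N(\bar z;K)=\{0_{X^*}\}\times N(\bar u;\mathcal{U})$, it follows that $(\alpha^*,\theta^*)\in\partial^\infty V(\bar w)$ if and only if there exist $u^*\in N(\bar u;\mathcal{U})$ and $(a,v)\in\Bbb{R}^n\times L^q([0,1],\Bbb{R}^n)$ with $\mathcal{T}^*(a,v)=(\alpha^*,\theta^*)$ and $\mathcal{M}^*(a,v)=(0_{X^*},u^*)$.

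It then remains to unravel these two operator equations with Lemma~\ref{Au_lemma1}, along the same lines as in the proof of Theorem~\ref{Control_main_theorem} but with $L_x$, $L_u$, $L_\theta$ and $g'(\bar x(1))$ all set to zero. From $\mathcal{T}^*(a,v)=(a,C^Tv)$ one reads off $\alpha^*=a$ and~\eqref{equa2}; from $\mathcal{B}^*(a,v)=-B^Tv$ one reads off~\eqref{equa4}. The remaining equation $\mathcal{A}^*(a,v)=0$, expanded through~\eqref{formula_new}, splits into $a=\int_0^1A^T(t)v(t)\,dt$ --- which together with $\alpha^*=a$ is~\eqref{equa1} --- and $v(\cdot)+\int_0^{(\cdot)}A^T(\tau)v(\tau)\,d\tau-\int_0^1A^T(t)v(t)\,dt=0$; the latter relation shows $v$ is absolutely continuous with $\dot v\in L^q([0,1],\Bbb{R}^n)$, hence $v\in W^{1,q}([0,1],\Bbb{R}^n)$, and upon differentiation and evaluation at $t=0$ it becomes precisely the Cauchy problem~\eqref{equa3}. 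Since $\mathbf{(A1)}$ makes $A^T(\cdot)$ measurable and essentially bounded, \eqref{equa3} has a unique solution in $W^{1,q}([0,1],\Bbb{R}^n)$, which is the asserted uniqueness of $v$. Every implication used above is reversible, so the equivalence follows.

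The work here is essentially bookkeeping rather than substance: the sole new substantive point relative to Theorem~\ref{Control_main_theorem} is the identity $\partial^\infty J(\bar z,\bar w)=\{(0,0)\}$, which is immediate from the real-valuedness of $J$. The remaining care points are the usual ones --- keeping the signs and transposes coming out of Lemma~\ref{Au_lemma1} straight, checking that the second coordinate of $\mathcal{A}^*(a,v)=0$ genuinely upgrades $v$ from $L^q$ to $W^{1,q}$ and produces the initial condition $v(0)=\alpha^*$, and verifying that the chain of equivalences can be run backwards so that every $(\alpha^*,\theta^*)$ fulfilling~\eqref{equa1}--\eqref{equa4} really arises from an admissible triple $(a,v,u^*)$.
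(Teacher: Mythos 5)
Your proposal is correct and takes essentially the same route as the paper's own proof: both apply the singular-subdifferential formula \eqref{BT1'} of Theorem~\ref{asprogramingproblem}, collapse the outer union via $\dom J=Z\times W$ so that $\partial^\infty J(\bar z,\bar w)=\{(0_{Z^*},0_{W^*})\}$, and then decode $\mathcal{T}^*(a,v)=(\alpha^*,\theta^*)$, $\mathcal{M}^*(a,v)=(0_{X^*},u^*)$ through Lemma~\ref{Au_lemma1} into the system \eqref{equa1}--\eqref{equa4}, with the same upgrade of $v$ to $W^{1,q}([0,1],\Bbb{R}^n)$ and the same identification $v(0)=\alpha^*$.
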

\begin{proof}
We apply Theorem \ref{Frechet differentiable} in the case where $J(z, w)$, $K$ and $ V(w)$, respectively, play the roles of $f(z,w)$, $\Omega$ and $h(w)$. By Lemmas \ref{Au_Lemma2} and \ref{Au_lemma3}, the conditions $\mathbf{(A1)}-\mathbf{(A4)}$ guarantee that all the assumptions of Theorem \ref{asprogramingproblem} are satisfied. Hence, by \eqref{BT1'} we have
	\begin{eqnarray}\label{BT1'a}
 	\partial^\infty V(\bar w)= \bigcup_{(w^*, z^*)\in \partial^\infty J(\bar z, \bar w)}\;\bigcup_{v^*\in N(\bar z;
   	 			K)}\big[ w^*+ \mathcal{T}^*\big((\mathcal{M}^*) ^{-1}(z^* +v^*)\big)\big].
   	 		\end{eqnarray}
Since ${\rm \dom\,}J= Z \times W$ and $\partial^\infty J(\bar z, \bar w)=N((\bar z, \bar w);{\rm \dom\,}J )$ by \cite[Proposition 4.2]{AnYen}, it holds that $\partial^\infty J(\bar z, \bar w)=\{(0_{Z^*}, 0_{W^*}) \}$. Therefore, from \eqref{BT1'a} one gets
	\begin{eqnarray}\label{BT1'ab}
 	\partial^\infty V(\bar w)= \bigcup_{v^*\in N(\bar z;K)}\big[ \mathcal{T}^*\big((\mathcal{M}^*) ^{-1}(v^*)\big)\big].
   \end{eqnarray}
 Thus, a vector $(\alpha^*, \theta^*)\in \Bbb{R}^n \times L^q ([0,1], \Bbb{R}^k)$ belongs to $ \partial^\infty V(\bar w)$ if and only if one can find $v^* \in N(\bar z;K)$ and $(a,v)\in \Bbb{R}^n \times L^q([0,1], \Bbb{R}^n)$ such that
 \begin{align}
 \label{equa5} \mathcal{M}^*(a,v)=v^* \ \;\mbox{and} \ \;\mathcal{T}^* (a,v)=(\alpha^*, \theta^*).
 \end{align}
 Since $N(\bar z; K)=\{0_{\Bbb{R}^n}\} \times N(\bar u; \mathcal{U})$, we must have $v^*=(0_{\Bbb{R}^n}, u^*)$ for some $u^*\in N(\bar u; \mathcal{U})$. By Lemma \ref{Au_lemma1}, we can rewrite \eqref{equa5} equivalently as
  \begin{align*}
   \begin{cases}
 a-\displaystyle\int_0^1 A^T(t) v(t) dt=0,\\
 v(\cdot)+ \displaystyle\int_0^{(.)} A^T(\tau) v(\tau) d \tau - \displaystyle\int_0^1 A^T(t) v(t) dt=0,\\
 -B^T(\cdot)v(\cdot)=u^*(\cdot),\\
 a=\alpha^*,\\
 C^T(\cdot)v(\cdot)=\theta^*(\cdot).
 \end{cases}
 \end{align*}
 Omitting the vector $a \in \Bbb{R}^n$, we transform this system to the form
 \begin{align*}
  \begin{cases}
 \alpha^*=\displaystyle\int_0^1 A^T(t) v(t) dt,\\
 v(\cdot)=- \displaystyle\int_0^{(.)} A^T(\tau) v(\tau) d \tau + \displaystyle\int_0^1 A^T(t) v(t) dt,\\
 u^*(\cdot)=-B^T(\cdot)v(\cdot),\\
 \theta^*(\cdot)=C^T(\cdot)v(\cdot).
 \end{cases}  
 \end{align*}
 The second equality of the last system implies that $v \in W^{1,q} ([0,1], \Bbb{R}^n )$ (see the detailed explanation in the proof of Theorem \ref{Control_main_theorem}). Hence, that system is equivalent to the following
 \begin{align*}
  \begin{cases}
 v\in W^{1,q}([0,1], \Bbb{R}^n ),\\
  \alpha^*=\displaystyle\int_0^1 A^T(t) v(t) dt,\\
  \dot v(t)= -A^T(t) v(t) \ \,\mbox{a.e.} \ t \in [0,1],\\
  v(0)=\displaystyle\int_0^1 A^T(t) v(t) dt,
  \\
  u^*(t)=-B^T(t)v(t) \ \,\mbox{a.e.} \ t \in [0,1],\\
  \theta^*(t)=C^T(t)v(t) \ \,\mbox{a.e.} \ t \in [0,1].
  \end{cases}  
 \end{align*}
 These properties and the inclusion $u^*\in N(\bar u; \mathcal{U})$ show that the conclusion of the theorem is valid.
\end{proof}

\section{Illustrative examples}
\markboth{\centerline{\it Illustrative examples}}{\centerline{\it D.T.V.~An, J.-C.~Yao, and N.D.~Yen}} \setcounter{equation}{0}

We shall apply the results obtained in Theorems \ref{Control_main_theorem} and \ref{control_singular_subdifferential} to an  optimal control problem which has a clear mechanical interpretation.

\medskip
Following  Pontryagin et al. \cite[Example~1, p.~23]{Pontryagin_etal.__1962}, we consider a vehicle of mass~1 moving without friction on a straight road, marked by an origin, under the impact of a force $u(t)\in \mathbb{R}$ depending on time  $t\in [0,1]$. Denoting the coordinate of the vehicle at $t$ by $x_1(t)$ and its velocity by $x_2(t)$. According to Newton's Second Law, we have $u(t)=1\times \ddot{x}_1(t)$; hence
\begin{align}\label{P_EX}
\begin{cases}
\dot x_1(t)=x_2(t),\\
\dot x_2 (t)=u(t).
\end{cases}
\end{align} Suppose that the vehicle's initial coordinate and velocity are, respectively, $x_1(0)=\bar\alpha_1$ and $x_2(0)=\bar\alpha_2$. 
The problem is to minimize both the distance of the vehicle to the origin and its velocity at terminal time $t=1$. Formally, it is required that the sum of squares $[x_1(1)]^2+[x_2(1)]^2$ must be minimum when the measurable control $u(\cdot)$ satisfies the constraint $\displaystyle\int_0^1 |u(t)|^2dt\leq 1$ (\textit{an energy-type control constraint}). 

\medskip
It is worthy to stress that the above problem is different from the one considered in \cite[Example~1, p.~23]{Pontryagin_etal.__1962}, where \textit{the pointwise control constraint} $u(t)\in [-1,1]$ was considered and the authors' objective is to minimize the terminal time moment $T\in [0,\infty)$ at which $x_1(T)=0$ and $x_2(T)=0$. The latter conditions mean that the vehicle arrives at the origin with the velocity 0. As far as we know, the classical Maximum Principle \cite[Theorem~1, p.~19]{Pontryagin_etal.__1962} cannot be applied to our problem.

\medskip
We will analyze the model \eqref{P_EX} with the control constraint  $\displaystyle\int_0^1 |u(t)|^2dt\leq 1$ by using the results of the preceding section. Let $X= W^{1,2}([0,1], \Bbb{R}^2 )$, $ U= L^2([0,1], \Bbb{R} )$, $\Theta=  L^2([0,1], \Bbb{R}^2 ).$ Choose $A(t)=A$, $B(t)=B$, $C(t)=C$ for all $t\in [0,1]$, where  $$
A=\left(
\begin{array}{ll}
0&	1\\
0&	0\\
\end{array}   \right),
\quad
B=\left(
\begin{array}{l}
0\\
1\\
\end{array}   \right),  	
\quad  C=\left(
\begin{array}{ll}
1& 0\\
0 & 1\\
\end{array}   \right).	
$$
Put $g(x)=\|x\|^2$ for $x\in\mathbb{R}^2$ and $L(t,x,u,\theta)=0$ for $(t,x,u,\theta)\in [0,1]\times\mathbb{R}^2\times\mathbb{R}\times\mathbb{R}^2$. 
Let $\mathcal{U}=\left\{u \in L^2([0,1], \Bbb{R} ) \mid ||u||_2 \le 1\right\}.$ With the above described data set, the optimal control problem \eqref{objective_Function_Control}--\eqref{control_constraint} becomes
 \begin{align}\label{E_problem1n}
 \begin{cases}
 J(x,u,w)=x_1^2(1)+ x_2^2(1)\to {\rm \inf}\\
 \dot x_1(t)=x_2(t)+\theta_1(t),\
 \dot x_2(t)=u(t)+\theta_2(t),\\
 x_1(0)=\alpha_1, \,
 x_2(0)=\alpha_2,\,
 u \in \mathcal{U}.
 \end{cases}
 \end{align}
 The perturbation $\theta_1(t)$ may represent a noise in the velocity, that is caused by a small wind. Similarly, the perturbation $\theta_2(t)$ may indicate a noise in the force, that is caused by the inefficiency and/or improperness of the reaction of the vehicle's engine in response to a human control decision. We define the function $\bar\theta\in\Theta$ by setting $\bar\theta(t)=(0,0)$ for all $t\in [0,1]$. The vector $\bar\alpha=(\bar\alpha_1,\bar\alpha_2)\in\mathbb{R}^2$ will be chosen in several ways. 
 
 \medskip
 In next examples, optimal solutions of \eqref{E_problem1n} is sought for $\theta=\bar\theta$ and $\alpha=\bar\alpha$, where $\bar\alpha$ is taken from certain subsets of $\mathbb{R}^2$. These optimal solutions are used in the subsequent two examples, where we compute the subdifferential and the singular subdifferential of the optimal value function $V(w)$, $w=(\alpha,\theta)\in\mathbb R^2\times \Theta$, of \eqref{E_problem1n} at $\bar w=(\bar\alpha,\bar\theta)$  by applying Theorems \ref{Control_main_theorem} and \ref{control_singular_subdifferential}.

 \begin{ex}\label{EX1} \rm Consider the parametric problem \eqref{E_problem1n} at the parameter $w=\bar w$:
 	\begin{align}\label{E_problem1n(1)}
 	\begin{cases}
 	J(x,u, \bar w)=x_1^2(1)+ x_2^2(1)\to {\rm \inf}\\
 	\dot x_1(t)=x_2(t),\ \,
 	\dot x_2(t)=u(t),\\ x_1(0)=\bar\alpha_1, \,
 	x_2(0)=\bar\alpha_2,\,
 	u \in \mathcal{U}.
 	\end{cases}
 	\end{align}
 	In the notation of Section \ref{section3}, we interpret \eqref{E_problem1n(1)} as the parametric optimization problem
 	 \begin{align}\label{E_problem1n(2)}
 	  	\begin{cases}
 	  	J(x,u, \bar w)=x_1^2(1)+ x_2^2(1)\to {\rm \inf}\\
 	  	(x,u)\in G(\bar  w) \cap K,
 	  	 	\end{cases}
 	  	\end{align}
 	 where $G(\bar w)=\left\{(x,u)\in X \times U \mid \mathcal{M}(x,u)=\mathcal{T}(\bar w)\right\}$ and $K=X \times \mathcal{U}.$ 
Then, in accordance with \cite[Proposition~2, p.~81]{Ioffe_Tihomirov_1979}, $(\bar x, \bar u)$ is a solution of \eqref{E_problem1n(1)} if and only if
\begin{align}
\label{1plus}
(0_{X^*},0_{U^*})\in \partial_{x,u} J(\bar x, \bar u, \bar w) +N((\bar x, \bar u); G(\bar w) \cap K ).
\end{align}

\textit{Step 1} \big (computing the cone $N((\bar x, \bar u); G(\bar w))$\big). We have
\begin{align}
\label{normal_cone_G}
 N((\bar x, \bar u);G(\bar w))={\rm rge}(\mathcal{M^*}):=\{\mathcal{M^*}x^*\mid x^* \in X^* \}.
\end{align}
 Indeed, since $G(\bar w)=\left\{(x,u)\in X \times U \mid \mathcal{M}(x,u)=\mathcal{T}(\bar w)\right\}$ is an affine manifold,  \begin{align}\label{ker_mathcal_M} N((\bar x, \bar u);G(\bar w))&=({\rm ker}\mathcal{M})^\perp\\ \nonumber
  & =\{(x^*,u^*)\in X^* \times U^* \mid \langle (x^*,u^*), (x,u)\rangle=0\ \forall (x,u)\in {\rm ker}\mathcal{M}\}.\end{align}
  For any $z(\cdot)=(z_1(\cdot),z_2(\cdot))\in X$, if we choose $x_2(t)=z_2(0)$ and $x_1(t)=z_1(t)+z_2(0)t$ for all $t\in [0,1]$, and $u(t)=\dot x_2(t)$ for a.e. $t\in [0,1]$, then $(x,u)\in X\times U$ and $\mathcal{M}(x,u)=z$. This shows that the continuous linear operator $\mathcal{M}:X \times U\to X$ is surjective. In particular,  $\mathcal{M}$ has closed range. 
Therefore, by \cite[Proposition~2.173]{Bonnans_Shapiro_2000}, from \eqref{ker_mathcal_M} we get $$ N((\bar x, \bar u);G(\bar w))=({\rm ker}\mathcal{M})^\perp={\rm rge}(\mathcal{M^*})=\{\mathcal{M^*}x^*\mid x^* \in X^* \};$$ so \eqref{normal_cone_G} is valid.

\textit{Step 2} \big(decomposing the cone $N((\bar x, \bar u); G(\bar w ) \cap K)$\big).
To prove that 
\begin{align}
\label{decompose_normal}
N((\bar x, \bar u); G(\bar w ) \cap K)
  =\{0_{X^*}\} \times N(\bar u; \mathcal{U}) + N((\bar x, \bar u);G(\bar w)),
\end{align}
we first notice that 
\begin{align}
\label{normal_times}
N((\bar x, \bar u); K)
  =\{0_{X^*}\} \times N(\bar u; \mathcal{U}).
 \end{align} 
Next, let us verify the normal qualification condition
 \begin{align}
 \label{Q-C}
  N((\bar x, \bar u); K) \cap [- N((\bar x, \bar u);G(\bar w))]=\{(0,0)\}
 \end{align}
 for the convex sets $K$ and ${\rm gph}\, G$. Take any $(x_1^*,u_1^*) \in  N((\bar x, \bar u); K) \cap [- N((\bar x, \bar u);G(\bar w))].$ On one hand, by \eqref{normal_times} we have $x_1^*=0$ and $u_1^*\in N(\bar u; \mathcal{U})$. On the other hand, by \eqref{normal_cone_G} and the third assertion of Lemma~\ref{Au_lemma1}, we can find an element $$x^*=(a,v) \in X^*=\mathbb{R}^2 \times L^2([0,1], \mathbb{R}^2)$$ such that  $x_1^*= -\mathcal{A}^* (a,v)$ and $u_1^*=-\mathcal{B}^*(a,v).$ Then
 \begin{align}\label{system_n}
 0= \mathcal{A}^* (a,v),\ \, u_1^*= -\mathcal{B}^*(a,v).
  \end{align}
 Write $a=(a_1,a_2)$, $v=(v_1,v_2)$ with $a_i\in\mathbb R$ and $v_i\in  L^2([0,1], \mathbb{R})$, $i=1,\, 2$. According to Lemma~\ref{Au_lemma1}, \eqref{system_n} is equivalent to the following system
  \begin{align}\label{system2}
  \begin{cases}
  a_1=0,\, a_2-\displaystyle\int_0^1 v_1(t)dt=0,\\
   v_1=0, \\
   v_2+ \displaystyle\int_0^{(.)} v_1(\tau)d\tau-\int_0^1 v_1(t)dt=0,\\
  u_1^*=v_2.
  \end{cases}
 \end{align}
From \eqref{system2} it follows that $(a_1, a_2)=(0,0),$ $(v_1,v_2)=(0,0)$ and $u_1^*=0$. Thus $(x_1^*, u_1^*)=(0,0).$ Hence,~\eqref{Q-C} is fulfilled.
 
 Furthermore, since $\mathcal{U}=\left\{u \in L^2([0,1], \Bbb{R} ) \mid ||u||_2 \le 1\right\}$, we have ${\rm int}\, \mathcal{U}\not=\emptyset$; so $K$ is a convex set with nonempty interior. Due to \eqref{Q-C},  one cannot find any $(x_0^*, u_0^*) \in  N((\bar x, \bar u); K)$ and $(x_1^*,u_1^*)\in N((\bar x, \bar u);G(\bar w))$, not all zero, with $(x_0^*,u_0^*)+(x_1^*, u_1^*)=0.$ Hence, by \cite[Proposition~3, p.~206]{Ioffe_Tihomirov_1979}, $G(\bar w) \cap {\rm int}\, K\not=\emptyset$. Moreover, according to \cite[Proposition~1, p.~205]{Ioffe_Tihomirov_1979}, we have 
 $$ N((\bar x, \bar u); G(\bar w ) \cap K)
   =N((\bar x, \bar u);K) + N((\bar x, \bar u);G(\bar w)).$$
  Hence, combining the last equation with \eqref{normal_times} yields \eqref{decompose_normal}.
     
\textit{ Step 3} \big(computing the partial subdifferentials of $J( \cdot , \cdot, \bar w)$ at $(\bar x, \bar u)$\big ).
We first note that $J(x,u,\bar w)$ is a convex function. 
Clearly, the assumptions $\mathbf{(A1)}$ and $\mathbf{(A2)}$ are satisfied. Hence, by Lemma \ref{Au_Lemma2}, the function $J(x, u, \bar w )=g( x (1))=x_1^2(1)+ x_2^2(1)$ is Fr\'echet differentiable at $(\bar x,\bar u)$,  $J_u(\bar x, \bar u, \bar w)=0_{U^*}$, and \begin{align}\label{derivativeJ_0}J_x(\bar x, \bar u, \bar w)=\big(g'( \bar x(1)), g'(\bar x(1)) \big) =\big( (2 \bar x_1(1), 2 \bar x_2(1)), (2 \bar x_1(1), 2  \bar x_2(1)) \big),\end{align} where the first symbol $(2 \bar x_1(1), 2  \bar x_2(1)) $ is a vector in $\mathbb R^2$, while the second symbol $(2 \bar x_1(1), 2  \bar x_2(1)) $ signifies the constant function $t\mapsto  (2 \bar x_1(1), 2  \bar x_2(1)) $ from $[0,1]$ to $\mathbb R^2$. 
 Therefore, one has
\begin{equation}\label{derivativeJ}
\partial J_{x,u}(\bar x, \bar u, \bar w)= \left\{\big (J_x(\bar x, \bar u, \bar w), 0_{U^*} \big )\right\} 
\end{equation} with $J_x(\bar x, \bar u, \bar w)$ being given by \eqref{derivativeJ_0}. 

\smallskip
\textit{ Step 4} (solving the optimality condition)
By \eqref{normal_cone_G}, \eqref{decompose_normal}, and \eqref{derivativeJ}, we can assert that  \eqref{1plus} is fulfilled if and only if there exist  $u^* \in N(\bar u; \mathcal{U})$ and $x^*=(a,v) \in \mathbb{R}^2 \times L^2([0,1], \mathbb{R}^2 )$ with $a=(a_1,a_2)\in \mathbb{R}^2$, $v=(v_1,v_2)\in L^2([0,1], \mathbb{R}^2 )$, such that
\begin{align}
\label{4plus}
\big( \big ((-2 \bar x_1(1), -2 \bar x_2(1)), (-2  \bar x_1(1),- 2  \bar x_2(1))\big ), -u^* \big) =\mathcal{M^*}(a,v).
\end{align}
According to Lemma \ref{Au_lemma3}, we have $\mathcal{M^*}(a,v)=(\mathcal{A^*}(a,v), \mathcal{B^*} (a,v) ),$ where
\begin{align*}
\mathcal{A}^*(a,v)= \bigg( a- \int_0^{1} A^T(t) v(t) dt,\,  v+ \int_0^{(.)} A^T(\tau) v(\tau) d \tau -   \int_0^{1} A^T(t) v(t) dt \bigg),
\end{align*}
and $\mathcal{B}^*(a,v)=-B^Tv.$ Combining this with \eqref{4plus} gives
\begin{align}\label{4plusn}
\begin{cases}
-2 \bar x_1(1)=a_1,\ -2 \bar x_2(1)=a_2 - \displaystyle \int_0^1 v_1(t)dt,
\\
 -2 \bar x_1(1)=v_1,\, -2 \bar x_2(1)=v_2+\displaystyle\int_0^{(.)} v_1(\tau )d \tau - \displaystyle\int_0^1 v_1(t)dt,\\
u^*=v_2.
\end{cases}
\end{align}
If we can choose $a=0$ and $v=0$ for \eqref{4plusn}, then $u^*=0$; so $u^*\in N(\bar u; \mathcal{U})$. Moreover,~\eqref{4plusn} reduces to
\begin{align}\label{initial_constraints}
 \bar x_1(1)=0,\  \,\bar x_2(1)=0.
\end{align}
Besides, we observe that $(\bar x, \bar u) \in G(\bar w)$ if and only if
\begin{align}\label{5plus}
 	\begin{cases}
  	\dot {\bar x}_1(t)=\bar x_2(t),\ \,
 	\dot {\bar x}_2(t)=\bar u(t),\\ \bar x_1(0)=\bar\alpha_1, \,
 \bar 	x_2(0)=\bar\alpha_2,\,
 	\bar u\in \mathcal{U}.
 	\end{cases}
 	\end{align}
 Combining \eqref{initial_constraints} with \eqref{5plus} yields
 \begin{align}\label{6plus}
  	\begin{cases}
   	\dot {\bar x}_1(1)=0,\ \,
  	\dot {\bar x}_1(0)=\bar \alpha_2,\\ \bar x_1(0)=\bar\alpha_1, \,
    \bar x_1(1)=0,\\
    \dot {\bar x}_1(t)=\bar x_2(t),\ \,
     	\dot {\bar x}_2(t)=\bar u(t),\\
  	\bar u\in \mathcal{U}.
  	\end{cases}
  	\end{align}
  We shall find $\bar x_1(t)$ in the form $\bar x_1(t)=at^3+bt^2+ct+d$. Substituting this $\bar x_1(t)$ into the first four equalities in \eqref{6plus}, we get
  $$\begin{cases}
  3a+2b+c=0,\  c=\bar \alpha_2,\\
  d=\bar \alpha_1,\ a+b+c+d=0.
  \end{cases}$$
Solving this system, we have
$ a= 2\bar \alpha_1 + \bar \alpha_2,$ $
  b=-3 \bar \alpha_1 -2 \bar \alpha_2,$ $
  c=\bar \alpha_2,$ $
  d=\bar \alpha_1.
 $
Then $ \bar x_1(t)=(2\bar \alpha_1 + \bar \alpha_2)t^3- (3 \bar \alpha_1 +2 \bar \alpha_2 ) t^2 +\bar\alpha_2 t +\bar\alpha_1.$ So, from the fifth and the sixth equalities in  \eqref{6plus} it follows that
   \begin{align*}
 \begin{cases}
 \bar x_2(t)=\dot {\bar{x}}_1(t)=3(2\bar \alpha_1 + \bar \alpha_2)t^2- 2(3\bar  \alpha_1 +2\bar  \alpha_2 ) t +\bar\alpha_2 ,\\
 \bar u(t)=\dot {\bar{x}}_2(t)=(12\bar \alpha_1 + 6\bar \alpha_2)t- (6 \bar \alpha_1 +4\bar \alpha_2 ) .
 \end{cases}
 \end{align*}
Now, condition $\bar u \in \mathcal{U}$ in \eqref{6plus} means that
\begin{align}\label{7plus}
1 \ge \int_0^1 |\bar u(t)|^2dt &= \int_0^1 \left[(12\bar \alpha_1 +6 \bar \alpha_2)t- (6 \bar \alpha_1 +4\bar \alpha_2 ) \right]^2 dt.
\end{align}
By simple computation, we see that \eqref{7plus} is equivalent to
\begin{align}
\label{8plus}
12 \bar \alpha_1^2 + 12 \bar \alpha_1 \bar \alpha_2 +4 \bar \alpha_2^2 -1 \le 0.
\end{align}
Clearly, the set $\Omega$ of all the points $\bar\alpha=(\bar \alpha_1, \bar \alpha_2)\in\mathbb{R}^2$ satisfying \eqref{8plus} is an ellipse. 
We have shown that for every $\bar\alpha=(\bar \alpha_1, \bar \alpha_2)$ from $\Omega$, problem \eqref{E_problem1n(1)} has an optimal solution $(\bar x, \bar u)$, where 
   \begin{align}\label{optimal_solution}
   \begin{cases}
   \bar x_1(t)=(2\bar \alpha_1 + \bar \alpha_2)t^3-(3 \bar \alpha_1 +2 \bar \alpha_2 ) t^2 +\bar\alpha_2 t +\bar\alpha_1,\\
   \bar x_2(t)=3(2\bar \alpha_1 + \bar \alpha_2)t^2- 2(3\bar  \alpha_1 +2\bar  \alpha_2 ) t +\bar\alpha_2 ,\\
   \bar u(t)=(12\bar \alpha_1 +6 \bar \alpha_2)t- (6 \bar \alpha_1 +4\bar \alpha_2 ) .
   \end{cases}
   \end{align}
 In this case, the optimal value is $J(\bar x,\bar u'\bar w)=0.$ 
 \end{ex}
  
In the forthcoming two examples, we will use Theorems \ref{Control_main_theorem} and \ref{control_singular_subdifferential} to compute the subdifferential and the singular subdifferential of the optimal value function $V(w)$ of \eqref{E_problem1n} at $\bar w=(\bar\alpha,\bar\theta)$, where $\bar\alpha$ satisfies condition \eqref{8plus}. Recall that the set of all the points $\bar\alpha=(\bar \alpha_1, \bar \alpha_2)\in\mathbb{R}^2$ satisfying \eqref{8plus} is an ellipse, which has been denoted by $\Omega$. 

\begin{ex} \label{EX3} \rm \textit{(Optimal trajectory is implemented by an internal optimal control)} 
For $\alpha=\bar \alpha:=\left(\frac{1}{5}, 0 \right)$, that belongs to ${\rm int}\,\Omega$, and $\theta=\bar\theta$ with $\bar\theta(t)=(0,0)$ for all $t\in[0,1]$, 
the control problem \eqref{E_problem1n} becomes
\begin{align}\label{E_problem1}
\begin{cases}
J(x,u)=||x(1)||^2 \to {\rm \inf}\\
\dot x_1(t)=x_2(t),\
\dot x_2(t)=u(t),\\
x_1(0)=\frac{1}{5}, \,
x_2(0)=0,\,
u \in \mathcal{U}.
\end{cases}
\end{align}
 For the parametric problem \eqref{E_problem1n}, it is clear that the assumptions $\mathbf{(A1)}$ and $\mathbf{(A2)}$ are satisfied. As $C(t)=\left(
  \begin{array}{ll}
  1&	0\\
  0&	1\\
  	\end{array}
   \right)$ for $t\in\ [0,1]$, one has for every $v \in \Bbb{R}^2$ the following 
   $$||C^T(t)v||=||v||\ \, {\rm \mbox{a.e.}}\ t\in [0,1].$$ Hence, assumption $\mathbf{(A5)}$ is also satisfied. Then, by Proposition \ref{Proposition_condition}, the assumptions $\mathbf{(A3)}$ and $\mathbf{(A4)}$ are fulfilled. According to \eqref{optimal_solution} and the analysis given in Example \ref{EX1}, the pair $(\bar x, \bar u)\in X\times U$, where $\bar x(t)= \left( \frac{2}{5} t^3-\frac{3}{5}t^2 +\frac{1}{5} , \frac{6}{5}t^2 -\frac{6}{5}t \right)$ and $\bar u(t)=\frac{12}{5}t-\frac{6}{5}$ for $t\in [0,1]$, is a solution of \eqref{E_problem1}. In this case, $\bar u(t)$ is an interior point of $\mathcal{U}$ since $\int_0^1|\bar u(t)|^2dt =\frac{12}{25}<1$.
 Thus, by Theorem \ref{Control_main_theorem}, a vector $(\alpha^*, \theta^*) \in \Bbb{R}^2 \times L^2 ([0,1], \Bbb{R}^2)$ belongs to $\partial \, V(\bar\alpha, \bar \theta)$ if and only if
   \begin{align}
   \label{eq11}
   \alpha^*= g'(\bar x(1)) -\int_0^1 A^T(t)y(t)dt
   \end{align}
   and
   \begin{align}
   \label{eq55}
   \theta^* (t) =-C^T (t)y(t)\ \,\mbox{a.e.}\ t\in [0,1],
   \end{align}
   where $y \in W^{1,2} ([0,1],\Bbb{R}^2)$ is the unique solution of the system 
   \begin{align}\label{eq33}
   \begin{cases}
   \dot y (t) =-A^T (t)y(t)\ \, \mbox{a.e.}\ t\in [0,1],\\
   y(1)=-g'( \bar x(1)),
   \end{cases}
   \end{align}
   such that the function $u^* \in L^2([0,1], \Bbb{R})$ defined by
   \begin{align}
   \label{eq44}
    u^*(t)=B^T (t)y(t)\ \,\mbox{a.e.}\ t\in [0,1]
   \end{align}
   satisfies the condition $u^* \in N(\bar u; \mathcal{U}).$ 
   
   Since $\bar x(1)= \left(0,0 \right)$, 
            we have $g'( \bar x(1))=(0,0)$.          
 So, \eqref{eq33} can be rewritten as 
    \begin{align*}
      \begin{cases}
      \dot y_1 (t) =0,\
      \dot y_2(t)=-y_1(t),\\
      y_1(1)=0,\
      y_2(1)=0.
      \end{cases}
      \end{align*}
Clearly, $y(t)=\left(0,0
            \right)$ is the unique solution of this terminal value problem . Combining this with \eqref{eq11}, \eqref{eq55} and \eqref{eq44}, we obtain $\alpha^*= \left(0,0 \right)$ and $\theta^*(t)=\theta^*=(0,0)$ a.e. $t\in [0,1],$ and $u^*(t)=0$ a.e. $t \in [0,1]$. Since
            $u^*(t)=0$ satisfies the condition $u^* \in N(\bar u; \mathcal{U})$, we have $\partial V(\bar w)=\{(\alpha^*, \theta^*)\}$, where $\alpha^*= \left(0,0 \right)$ and $\theta^*=(0,0)$.
  
  We now compute $\partial V^\infty(\bar \alpha, \bar \theta)$. By Theorem \ref{control_singular_subdifferential}, a vector $(\tilde\alpha^*, \tilde\theta^*) \in \Bbb{R}^2 \times L^2 ([0,1], \Bbb{R}^2)$ belongs to $\partial^\infty V(\bar w)$ if and only if
\begin{align}
\label{equa11}
\tilde\alpha^*=\int_0^1 A^T(t) v(t)dt,
\end{align}
\begin{align}
\label{equa22}
\tilde\theta^*(t)=C^T(t)v(t)\ \,\mbox{a.e.}\ t\in[0,1], 
\end{align}
where $v \in W^{1,2}([0,1], \Bbb{R}^2 )$ is the unique solution of the system
\begin{align}
\label{equa33}
\begin{cases}
\dot v(t)=-A^T(t)v(t)\ \,\mbox{a.e.} \ t\in[0,1],\\
v(0)=\tilde\alpha^*,
\end{cases}
\end{align}
such that the function $\tilde u^* \in L^2([0,1], \Bbb{R})$ given by 
\begin{align}
\label{equa44}
\tilde u^*(t)=-B^T(t)v(t)\ \;\mbox{a.e.} \ t\in[0,1]
\end{align}
belongs to $ N(\bar u; \mathcal{U}).$  Thanks to \eqref{equa11}, we can rewrite \eqref{equa33} as
 \begin{align*}
      \begin{cases}
      \dot v_1 (t) =0,\
      \dot v_2(t)=-v_1(t),\\
      v_1(0)=0,\
      v_2(0)=\displaystyle\int_0^1 v_1(t)dt.
      \end{cases}
      \end{align*}
  It is easy to show that $v(t)=(0,0)$ is the unique solution of this system. Hence, \eqref{equa11}, \eqref{equa22} and \eqref{equa44} imply that $\tilde\alpha^*=(0,0)$, $\tilde\theta^*=(0,0)$ and $\tilde u^*=0$. Since  $\tilde u^* \in N(\bar u; \mathcal{U})$, we have  $\partial^\infty V(\bar w)=\{(\tilde\alpha^*,\tilde\theta^*)\}$, where $\tilde\alpha^*= \left(0,0 \right)$ and $\tilde\theta^*=(0,0)$.
\end{ex}

\begin{ex} \label{EX4} \rm \textit{(Optimal trajectory is implemented by a boundary optimal control)}
For $\alpha=\bar \alpha:=\left(0, \frac{1}{2}\right)$, that belongs to ${\rm \partial}\,\Omega$, and $\theta=\bar\theta$ with $\bar\theta(t)=(0,0)$ for all $t\in[0,1]$, problem \eqref{E_problem1n} becomes
\begin{align}\label{E_problem4}
\begin{cases}
J(x,u)=||x(1)||^2 \to {\rm \inf}\\
\dot x_1(t)=x_2(t),\
\dot x_2(t)=u(t),\\
x_1(0)=0, \,
x_2(0)=\frac{1}{2},\,
u \in \mathcal{U}.
\end{cases}
\end{align}
As it has been shown in Example \ref{EX1},  $(\bar x, \bar u) =\left(\frac{1}{2} t^3-t^2 +\frac{1}{2}t , \frac{3}{2}t^2-2t + \frac{1}{2}, 3t-2\right)$ is a solution of \eqref{E_problem4}. In this case, we have $\int_0^1 |\bar u(t)|^2dt=\int_0^1 (3t-2)^2dt=1$. This means that $\bar u(t)$ is a boundary point of $\mathcal{U}.$ So, $N(\bar u; \mathcal{U})=\{\lambda \bar u\mid \lambda\geq 0\}$.  Since $\bar x(1)= \left(0,0 \right)$, arguing in the same manner as in Example \ref{EX3}, we obtain $\partial V(\bar w)=\left\{ \left(\alpha^*,\theta^*\right)\right\}$ and $\partial^\infty V(\bar w)=\left\{\left(\tilde\alpha^*,\tilde\theta^*\right)\right\}$, where  $\alpha^*=\tilde\alpha^*=\left(0, 0\right)$ and $\theta^*=\tilde\theta^*=\left(0, 0\right)$.
\end{ex}
\section*{Acknowledgements}

\noindent
This work was supported by College of Sciences, Thai Nguyen University (Vietnam), the Grant MOST 105-2221-E-039-009-MY3 (Taiwan), and  National Foundation for Science $\&$ Technology Development (Vietnam).

\end{document}